\begin{document}
\newtheorem{theo}{Theorem}[section]
\newtheorem{defi}[theo]{Definition}
\newtheorem{lemm}[theo]{Lemma}
\newtheorem{prop}[theo]{Proposition}
\newtheorem{rem}[theo]{Remark}
\newtheorem{exam}[theo]{Example}
\newtheorem{cor}[theo]{Corollary}
\newcommand{\mat}[4]{
    \begin{pmatrix}
           #1 & #2 \\
           #3 & #4
      \end{pmatrix}
   }
\def\Z{\mathbb{Z}} 
\def\R{\mathcal{R}} 
\def\I{\mathcal{I}} 
\def\C{\mathbb{C}} 
\def\N{\mathbb{N}} 
\def\PP{\mathbb{P}} 
\def\Q{\mathbb{Q}} 
\def\L{\mathcal{L}} 
\def\ol{\overline} 
\def\bs{\backslash} 
\def\part{P} 

\newcommand{\gcD}{\mathrm {\ gcd}} 
\newcommand{\End}{\mathrm {End}} 
\newcommand{\Aut}{\mathrm {Aut}} 
\newcommand{\GL}{\mathrm {GL}} 
\newcommand{\SL}{\mathrm {SL}} 
\newcommand{\PSL}{\mathrm {PSL}} 
\newcommand{\Mat}{\mathrm {Mat}} 

\newcommand\ga[1]{\overline{\Gamma}_0(#1)} 
\newcommand\pro[1]{\mathbb{P}1(\mathbb{Z}_{#1})} 
\newcommand\Zn[1]{\mathbb{Z}_{#1}}
\newcommand\equi[1]{\stackrel{#1}{\equiv}}
\newcommand\pai[2]{[#1:#2]} 
\newcommand\modulo[2]{[#1]_#2} 
\newcommand\sah[1]{\lceil#1\rceil} 
\def\sol{\phi} 
\begin{center}
{\LARGE\bf On conjugations of circle homeomorphisms with two
break points} \footnote{ MSC: 37E10, 37C15,
37C40

Keywords and phrases: Circle homeomorphism, break point, rotation
number, invariant measure, singular function.
} \\
\vspace{.25in} { \large {\sc Habibulla Akhadkulov\footnote{School of Mathematical Sciences, Faculty of Science and Technology, Universiti Kebangsaan, 43600 UKM Bangi, Selangor Darul Ehsan, Malaysia, \quad E-mail: akhadkulov@yahoo.com},
 Akhtam Dzhalilov\footnote{Faculty of  Mathematics and Mechanics, Samarkand State University, Boulevard st. 15, 703004 Samarkand, Uzbekistan, \quad E-mail: a\_dzhalilov@yahoo.com},

Dieter Mayer\footnote{Institut f\"ur Theoretische Physik, TU Clausthal, Leibnizstra{\ss}e 10, D-38678 Clausthal-Zellerfeld, Germany.
\quad E-mail: dieter.mayer@tu-clausthal.de}}}.

\end{center}


\title{conjugations between circle maps with a break point }

\begin{abstract}
Let   $f_{i}\in
C^{2+\alpha}(S^{1}\backslash\{{a_{i}},{b_{i}}\}) $, $\alpha>0, \,\,
i=1,2$, be circle homeomorphisms with two break points $a_{i}, b_{i}$ i.e. discontinuities in the
derivative $Df_{i} $, with identical irrational rotation number
$\rho$ and  $\mu_{1}([a_1,b_1])=\mu_{2}([a_2,b_2])$, where $\mu_i$ are the
 invariant measures of $f_i,\, i=1,2$. Suppose, the products of the jump ratios of $Df_{1} $ and $Df_{2} $
do not coincide, i.e.
 $\frac{Df_1(a_1-0)}{Df_1(a_1+0)}\cdotp\frac{Df_1(b_1-0)}{Df_1(b_1+0)}\neq
\frac{Df_2(a_2-0)}{Df_2(a_2+0)}\cdot\frac{Df_2(b_2-0)}{Df_2(b_2+0)}$
. Then
the
 map $ \psi$ conjugating $f_1$ and $f_2$ is a singular
function, i.e. it is continuous on $S^1,$ but $D\psi(x)=0$ a.e. with
respect to Lebesgue measure.
\end{abstract}

\section{Introduction}
 Let $f$ be an
orientation preserving homeomorphism of the circle
$S^{1}\equiv\mathbb{R}/\mathbb{Z}$ with lift
$\hat{f}:\mathbb{R}\rightarrow{\mathbb{R}}$, $\hat{f}$ continuous, strictly
increasing and $\hat{f}(t+1)=\hat{f}(t)+1$, $t\in\mathbb{R}$. The circle
homeomorphism $f$ is then defined by $f(x)=\hat{f}(\hat{x})$ $(mod\,1)$,
$x\in{S^{1}},$\, and $\,x\equiv \hat{x}+\mathbb{Z}$ with $\hat{x}\in [0,1)$. 
In the sequel $S^1$ will be identified with 
$[0,1)$ and $x \in S^1$ with $\hat{x} \in [0,1)$.
 The interval $[x,y] \subset S^1$
then corresponds to the interval $[\hat{x},\hat{y}]\subset [0,2)$.
 If $f$ is a circle diffeomorphism with irrational rotation number
$\rho=\rho_{f}$ and ${\log}\,D\hat{f}$ is of bounded variation, then $f$
is conjugate to the pure rotation $f_{\rho}$, that is, there
exists an essentially unique homeomorphism $\varphi$ of the circle
with $f=\varphi^{-1} \circ f_{\rho} \circ \varphi.$ This
classical result of Denjoy \cite{De1932} can be extended to circle
homeomorphisms with break points. The exact statement of the
corresponding theorem will be given later.

It is well known, that circle homeomorphisms $f$ with
irrational rotation number $\rho_{f}$ admit a unique $f$-
invariant probability measure $\mu_{f}$. Since the conjugating map
$\varphi$ and the invariant measure $\mu_{f}$ are related by
$\varphi(x)=\mu_{f}([0,x])$ (see \cite{CFS1982} ), regularity
properties of the conjugating map $\varphi$ imply
corresponding properties of the density of the absolutely
continuous invariant measure $\mu_{f}$. This problem of
smoothness of the conjugacy of smooth diffeomorphisms is by now
very well understood (see for instance
\cite{Ar1961,Mo1966,He1979,KO1989,KS1989,Yo1984}).

 An important
class of circle homeomorphisms are homeomorphisms with break
points or shortly, class P-homeomorphisms. In general their
ergodic properties like the invariant measures, their
renormalizations and also their rigidity properties are rather
different from those of diffeomorphisms (see
\cite{dMvS1993} chapter I and IV, \cite{He1979} chapter VI, \cite{
KhKm2003}).

The class of \textbf{ $P$-homeomorphisms} consists of orientation preserving
circle homeomorphisms $f$ whose lifts $\hat{f}$ are differentiable
away from countable many points $ \hat{b}\in BP(\hat{f})\subset [0,1)$, corresponding to the so called break points $b\in BP(f)\subset S^1$ of $f$, at which left and right derivatives, denoted
respectively by $D\hat{f}_{-}$ and $D\hat{f}_{+}$, exist,  such that
\begin{itemize}
\item[i)] there exist constants $0<c_{1}<c_{2}<\infty$ with
$c_{1}<D\hat{f}(\hat{x})<c_{2}$ for all $\hat{x}\in [0,1)\backslash{BP(\hat{f})}$,
$c_{1}<D\hat{f}_{-}(\hat{b})<c_{2}$ and $c_{1}<D\hat{f}_{+}(\hat{b})<c_{2}$ for
all $\hat{b}\in {BP(\hat{f})}$,
 \item[ii)] $\log\,D\hat{f}$ has bounded variation in $[0,1]$.
 \end{itemize}

 The ratio $\sigma_{f}(b):=\frac{D\hat{f}_{-}(\hat{b})}{D\hat{f}_{+}(\hat{b})}$ is
called the \textbf{jump ratio} of $f$ in $b\in BP(f)$. 
 Denote by $v$
the totaol variation  $v=Var_{[0, 1]}(\log\, D\hat{f})$ of $\log\,D\hat{f}$ on $[0, 1]$

General  $P$-homeomorphisms with
one break point  were first studied by K. Khanin and E. Vul in
\cite{KV1991}. Among other results it was proved by these authors that their  renormalizations  approximate
fractional linear transformations.
 Piecewise linear $(PL)$ orientation preserving circle
homeomorphisms with  break points are the simplest examples in the
class of P-homeomorphisms. They show up in many other areas of
mathematics as for instance in group theory, homotopy theory and in
logic via the Thompson group and its generalizations (see
\cite{St1992}). The invariant
measures of $PL$ homeomorphisms were first studied by M. Herman in
\cite{He1979}, those of general  $P$-homeomorphisms with
one break point by A. Dzhalilov and K. Khanin in
\cite{DK1998}. Their  main result
 is the following 

\begin{theo}\label{DK} Let $f$ be a  $P$-homeomorphism with one
break point $b$.
If the rotation number $\rho_{f}$ is irrational and $f\in
C^{2+\varepsilon}(S^{1}\backslash\{b\})$
for some $\varepsilon>0$, then the $f$-invariant probability measure
$\mu_{f}$ is singular with
 respect to Lebesgue measure $\mu_L$ on $S^{1}$, i.e. there exists a
 measurable subset $A\subset{S^{1}}$ such that $\mu_{f}(A)=1$ and
 $\mu_L(A)=0$.
\end{theo}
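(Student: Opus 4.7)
The plan is to exploit the nontriviality of the jump $\sigma := \sigma_f(b) = D\hat f_-(\hat b)/D\hat f_+(\hat b) \neq 1$ together with the Khanin--Vul renormalization picture to construct a Borel set of full $\mu_f$-measure but zero Lebesgue measure. First I would invoke the generalization of Denjoy's theorem to $P$-homeomorphisms (valid here because $\log D\hat f$ has bounded variation) to conclude that $f$ is topologically conjugate to the rigid rotation $f_\rho$ via a homeomorphism $\varphi$, and that the unique invariant probability measure is given by $\mu_f([0,x]) = \varphi(x)$. Singularity of $\mu_f$ is thus equivalent to $D\varphi = 0$ Lebesgue-a.e. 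Introduce the dynamical partitions $\xi_n$ associated to the convergents $p_n/q_n$ of $\rho$: the $q_n + q_{n-1}$ intervals whose endpoints lie in the finite orbit $\{f^i(b): 0 \le i < q_n+q_{n-1}\}$. Each atom of $\xi_n$ carries $\mu_f$-measure of exact order $1/q_n$, so controlling Lebesgue lengths of these atoms controls $D\varphi$.

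The first technical tool is a Denjoy--Koksma inequality adapted to $P$-homeomorphisms: since $f^{q_n}$ has break points only on the finite backwards orbit of $b$, and since the atoms of $\xi_n$ are bounded by consecutive such points, one shows that on any atom $\Delta \in \xi_n$ not containing $b$ one has $|\log Df^{q_n}(x) - \log Df^{q_n}(y)| \le v$ for $x,y \in \Delta$. Hence $Df^{q_n}$ is essentially constant on each atom of $\xi_n$, and its mean value on $\Delta$ is comparable to $|f^{q_n}(\Delta)|/|\Delta|$. This reduces the problem to understanding the geometry of the atoms of $\xi_n$ adjacent to $b$.

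The heart of the argument is the asymmetric contribution of the jump $\sigma \neq 1$. The Khanin--Vul analysis shows that when the $C^{2+\varepsilon}$-smoothness holds off $b$, the renormalizations of $f$ at the break point converge to a fractional-linear (M\"obius) map whose nontrivial parameter is measured by $\log \sigma$, in contrast to the diffeomorphism case where renormalizations converge to rotations. In particular, the ratio of Lebesgue lengths of the two atoms of $\xi_n$ meeting $b$ (one on each side) tends to $\sigma$ (or $\sigma^{-1}$) rather than to $1$. Iterating this using invariance of $\mu_f$ gives the key oscillation estimate: for $\mu_f$-a.e. $x$, the derivatives $Df^{q_n}(x)$ do not remain bounded away from $0$ and $\infty$, and in fact, along a subsequence, one side of the partition (at each scale) has $Df^{q_n}$ bounded away from $0$ by a factor that forces its complement to satisfy $Df^{q_n} \to 0$. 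Since $\int \log Df\, d\mu_f = 0$ by invariance, a Birkhoff-style balancing argument then yields $Df^{q_n}(x) \to 0$ on a set of $\mu_f$-measure tending to $1$.

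The main obstacle is converting this pointwise cocycle degeneracy into a genuinely $\mu_L$-null set of full $\mu_f$-measure. The strategy is to cover, for each large $n$, a subset $A_n \subset S^1$ with $\mu_f(A_n) \to 1$ by atoms of $\xi_n$ on which $Df^{q_n}$ is uniformly small (say $\le \delta_n$ with $\delta_n \to 0$), so that $\mu_L(A_n) \le \delta_n \sum |f^{q_n}(\Delta)| \to 0$; then $A := \limsup A_n$ has $\mu_f(A) = 1$ and $\mu_L(A) = 0$. The delicate point here, which is exactly where the $C^{2+\varepsilon}$ hypothesis is essential, is to upgrade the first-order Denjoy--Koksma bound to the summable cross-ratio (Schwarzian-type) distortion estimates required so that the union of small-derivative atoms forms a set with the above Borel--Cantelli behaviour. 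This quantitative distortion control, together with the M\"obius limiting geometry at $b$, closes the argument.
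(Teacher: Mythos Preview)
The paper does not prove this theorem. Theorem~\ref{DK} is stated in the Introduction as a known result, attributed to Dzhalilov and Khanin \cite{DK1998}; the present paper cites it as background and then moves on to the two-break-point situation (Theorems~1.2 and~1.5). There is therefore no ``paper's own proof'' to compare your proposal against.

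That said, your outline is in the right spirit: the original argument in \cite{DK1998} does hinge on cross-ratio distortion estimates (the very Lemma~\ref{lemm3.1} of this paper is quoted from \cite{DK1998}) and on the asymmetry created by the jump $\sigma\neq 1$ to force $D\varphi=0$ Lebesgue-a.e. However, several of your steps are heuristic rather than proofs. The claim that ``the ratio of Lebesgue lengths of the two atoms of $\xi_n$ meeting $b$ tends to $\sigma$'' is not how the argument actually runs; what one controls is the cross-ratio distortion of a carefully placed quadruple near the break point, showing it is bounded away from $1$ by a definite amount depending on $\sigma$, while for a point where $D\varphi>0$ the same distortion would have to tend to $1$. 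Your ``Birkhoff-style balancing argument'' and the Borel--Cantelli construction of $A=\limsup A_n$ are not needed and would be hard to make rigorous as stated; the actual mechanism is a direct contradiction at a hypothetical point of positive derivative, exactly parallel to the proof of Theorem~\ref{ADM1} in Section~4 of this paper. If you want a template, that proof (specialized to one break point) is essentially the argument of \cite{DK1998}.
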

I. Liousse got in \cite{Li2005} the same result for "generic"
$PL$ circle homeomorphisms with several break points whose
rotation number is irrational and of bounded type. In a next step A. Dzhalilov and I. Liousse
 \cite{DL2006} and A. Dzhalilov, I. Liousse and D. Mayer  \cite{DLM2009} studied
 another class of circle homeomorphisms with two break points.
Their  main result in  \cite{DLM2009}  is
\begin{theo}\label{1.2} Let $f$ be a  $P$-homeomorphism
satisfying the following conditions:
\begin{itemize}
\item[(a)] the rotation number $\rho=\rho_{f}$ of $f$ is
irrational;
 \item[(b)] $f$ has two break points $b_1$,
$b_2$ with $\sigma_{f}(b_1)\cdot\sigma_{f}(b_2)\ne 1$;
\item[(c)] $D\hat{f}$ is absolutely continuous on every connected
interval of $[0,1]\backslash\{\hat{b_1}, \hat{b_2}\}$ and
 \\$D^{2}\hat{f}\in{L^{1}([0,1],d\mu_L)}$.
\end{itemize}
Then the $f$- invariant probability measure $\mu_{f}$ is
singular with respect to Lebesgue measure $\mu_L$.\end{theo}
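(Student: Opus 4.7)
The plan is to reduce singularity of $\mu_f$ to showing that $D\varphi = 0$ almost everywhere with respect to Lebesgue measure $\mu_L$, where $\varphi(x) = \mu_f([0,x])$ is the conjugacy to the rigid rotation $R_\rho$ (existence of $\varphi$ follows from the extension of Denjoy's theorem to $P$-homeomorphisms alluded to in the introduction). Equivalently, it suffices to produce for each $\epsilon > 0$ a sequence of measurable sets $A_n \subset S^1$ with $\mu_f(A_n) \geq 1 - \epsilon$ and $\mu_L(A_n) \to 0$ as $n \to \infty$. Such sets will be built from the dynamical partitions $\xi_n$ of $S^1$ generated by iterating a base point $x_0$ under $f$ up to times $q_{n-1}$ and $q_n$, where $q_n$ are the denominators of the continued fraction convergents of $\rho$.

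First I would record the basic properties of $\xi_n$: it consists of $q_n + q_{n-1}$ intervals of two combinatorial types (short and long), and elements of $\xi_{n+1}$ refine those of $\xi_n$ according to the Ostrowski combinatorics of $\rho$. On every element $\Delta \in \xi_n$ whose forward orbit $\Delta, f\Delta, \dots, f^{q_n - 1}\Delta$ does not meet $b_1, b_2$ in its interior, the cross-ratio (Finzi--Koebe) inequality for $P$-homeomorphisms, together with hypothesis (c), produces a uniform distortion bound of the form $\bigl|\log Df^{q_n}(x) - \log Df^{q_n}(y)\bigr| \leq C$ for $x, y \in \Delta$. Since $f^{q_n}$ maps $\Delta$ essentially onto a neighboring element of $\xi_n$, this bound gives control of $\mu_L(\Delta)$ in terms of $\mu_L(f^{q_n}\Delta)$ up to an accumulated product of jump ratios coming from iterates of $b_1$ and $b_2$.

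The central step is to exploit $\sigma_f(b_1)\sigma_f(b_2) \neq 1$. Over a full orbit $x_0, f x_0, \dots, f^{q_n - 1}x_0$ each break point is encountered a controlled number of times dictated by the rotation combinatorics, and the associated jumps multiply together. The key estimate shows that this accumulated jump factor, viewed on a positive proportion of the intervals of $\xi_n$, is bounded away from $1$ by a quantity depending on a power of $\log(\sigma_f(b_1)\sigma_f(b_2))$ that grows with $n$. Combined with the distortion bound, this forces the ratio $\mu_L(\Delta)/\mu_f(\Delta)$ to be either much smaller or much larger than the average $1/(q_n + q_{n-1})$ on a definite proportion of the partition elements. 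Taking $A_n$ to be the union of the elements on which this ratio falls below a fixed small threshold, and iterating the estimate from level $n$ to level $n + k$, yields $\mu_f(A_n) \to 1$ while $\mu_L(A_n) \to 0$ along a suitable subsequence, giving the desired singularity.

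The hardest part will be the central step with two break points. In the one--break--point setting of Theorem~\ref{DK} the accumulated jump factor is straightforwardly $\sigma^{k_n}$ for an integer $k_n$ of encounters dictated by the rotation combinatorics. With two break points one must control both the number and the interlacing pattern of the encounters of $b_1$ versus $b_2$ along each orbit segment, and then show that the product of their contributions does not cancel. The hypothesis $\sigma_f(b_1)\sigma_f(b_2) \neq 1$ is exactly what rules out such cancellation in aggregate: by the minimality of the irrational rotation, each long orbit segment sees $b_1$ and $b_2$ in commensurate numbers, so the accumulated factor is comparable to a power of $\sigma_f(b_1)\sigma_f(b_2)$, which by hypothesis is not $1$. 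Making this combinatorial comparison precise, within the renormalization scheme defined by $\xi_n$, is where the bulk of the technical work lies.
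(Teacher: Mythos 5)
The paper you are working with does not prove Theorem~\ref{1.2}; it cites it from \cite{DLM2009} as background. There is therefore no in-paper proof to compare against, but the technique the authors use for the closely related Theorem~\ref{ADM1} (and the technique of \cite{DLM2009}) is known, and your proposal diverges from it in a way that exposes a genuine gap.

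The actual argument is a local, proof-by-contradiction one built on cross-ratio distortion: assume $D\varphi(x_0)=\omega>0$ at some point $x_0$, use that differentiability forces $Dist(z_1,z_2,z_3,z_4;\varphi)\approx 1$ on quadruples accumulating to $x_0$ that satisfy a comparability condition like $(C_{R_1,\varepsilon})$, then pick the quadruple $(z_1,z_2,z_3,z_4)$ centered at the $q_n$-preimage of a break point inside the fundamental interval $[t_0,f^{q_{n-1}}(t_0)]$ of the dynamical partition, and compute $Dist(z_1,z_2,z_3,z_4;f^{q_n})$. Over the $q_n$ iterates of this $q_n$-small interval, each break point is hit \emph{at most once}, and those two hits contribute a cross-ratio factor close to $\sigma_f(b_1)\cdot\sigma_f(b_2)$ (this is exactly the content of Lemmas~\ref{lemm3.4} and~\ref{lemm4.5}; the two subcases in (\ref{eq46}) and (\ref{eq146}) handle where the second preimage sits relative to the first). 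The conjugacy equation $\varphi\circ f = R_\rho\circ\varphi$ then transfers this $\neq 1$ distortion to $\varphi$ and contradicts differentiability. The choice of $x_0$ is arbitrary, so $D\varphi=0$ a.e., i.e.\ $\mu_f\perp\mu_L$.

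Your central step does not survive scrutiny. You claim that along a long orbit segment the jump contributions accumulate as a power of $\sigma_f(b_1)\sigma_f(b_2)$ ``that grows with $n$,'' because $b_1$ and $b_2$ are visited a commensurate, growing number of times. That is not what happens: the relevant objects are $q_n$-small intervals, whose first $q_n$ iterates are pairwise disjoint, so they encounter each break point at most once per $q_n$-cycle, and the Denjoy inequality bounds $\log Df^{q_n}$ by the total variation $v$ uniformly in $n$. There is no growing multiplicative discrepancy at the level of derivatives; the non-cancellation lives entirely in the cross-ratio of a well-chosen quadruple, and producing that quadruple (and showing it sees both break points) is the actual content. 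Your coarser scheme of partitioning $S^1$ into $A_n$ on which $\mu_L/\mu_f$ is uniformly small, while a reasonable template, is not supplied with a mechanism that delivers it; replacing ``commensurate encounters accumulate'' by the cross-ratio computation at a break-point preimage is exactly the missing idea. Separately, note that Theorem~\ref{1.2} is stated under the weaker Katznelson--Ornstein hypothesis (c) rather than $C^{2+\alpha}$; the single-iterate distortion bound you invoke must then be of the KO type (summable, not $O(|I|^{1+\alpha})$), which is an additional technical point your sketch does not address.
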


In the sequel we refer to the smoothness condition (c)  in Theorem 1.2 on $f$ as the Katznelson-Ornstein (KO)  condition.

The above theorems show that
for a  sufficiently piecewise smooth  circle homeomorphism $f$ with irrational rotation number and one or two break points
the map conjugating  $f$ and $f_{\rho}$ is singular. Consider next the regularity properties of the conjugating map between two class $P$-homeomorphisms with
one or two break points and coinciding irrational rotation numbers. The case of one break point with
coinciding jump ratios, the so called rigidity problem, was studied in great detail
 by K. Khanin and D. Khmelev in  \cite{KhKm2003} and by A. Teplinskii and
K. Khanin in   \cite{TK2004}.

If $\rho=[k_1,k_2,\ldots,k_n,\ldots]$ is the
continued fraction expansion of the irrational rotation number
$\rho$,  define the sets
$$
M_o=\{\rho:\, \forall n\in \mathbb{N}\;\exists \, C>0 :\, k_{2n-1}\leq
C\},
$$
$$
M_e=\{\rho: \forall n\in \mathbb{N} \;\exists \, C>0: \, k_{2n }\leq
C\}.
$$ The main result of \cite{TK2004}  is then  the following
\begin{theo}\label{ADM0}\emph({Teplinskii-Khanin}).
Let  $f_{i}\in C^{2+\alpha}(S^{1}\backslash\{{b_{i}}\}), i=1,2$,  be $P$- homeomorphisms each with one break
point $b_i $.  Assume 
\begin{itemize}
\item[(1)] their rotation numbers $\rho(f_{i}),\, \ i=1,2,$ are irrational and
coincide, i.e. $\rho(f_{1})=\rho(f_{2})=\rho, \
\rho\in\mathbb{R}^{1}\setminus\mathbb{Q}$;
 \item[(2)] their jump ratios
$\sigma_i=\sigma_{f_{i}}(b_{i}), \,i=1,2,$ coincide, i.e.
$\sigma_1=\sigma_2=\sigma$.
\end{itemize}
Then the map $\psi$ conjugating the homeomorphisms $f_ 1$ and
$f_2$ is a $C^1-$ diffeomorphism of the circle if either $\sigma> 1$
and  $\rho\in M_o$ or $\sigma<1$ and $\rho\in M_e$.\\
\end{theo}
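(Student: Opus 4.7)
The plan is to exploit the common combinatorial structure given by $\rho$ and reduce $C^1$ smoothness of $\psi$ to uniform convergence of atom-ratios of dynamical partitions. Fix $x_0\in S^1$; for each $n\in\N$ the orbit $\{f_i^j(x_0)\}_{0\le j<q_n+q_{n-1}}$, with $p_n/q_n$ the convergents of $\rho$, defines a partition $\part_n^{(i)}$ of $S^1$, and because $\rho(f_1)=\rho(f_2)$ the conjugacy $\psi$ maps atoms of $\part_n^{(1)}$ bijectively onto atoms of $\part_n^{(2)}$. It is classical that $\psi$ will be a $C^1$-diffeomorphism with strictly positive derivative once one shows that the ratios $|\psi(I)|/|I|$, as $I$ ranges over the atoms of generation $n$, converge uniformly on $S^1$ to a continuous positive function.

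The technical engine has two parts. First, one needs Denjoy--Koksma type distortion control for $P$-homeomorphisms: on any interval $J$ on which $f_i^{q_n}$ is injective, the bounded variation of $\log D\hat{f}_i$ combined with $C^{2+\alpha}$ regularity off the break point $b_i$ and a careful count of how often the orbit crosses $b_i$ yields a universal bound on $|\log Df_i^{q_n}(x)/Df_i^{q_n}(y)|$, giving bounded geometry of $\part_n^{(i)}$ on every scale. Second, one renormalizes: following Khanin--Vul, the rescaled pairs $(f_i^{q_n},f_i^{q_{n+1}})$ restricted to shrinking intervals about $x_0$ converge to a pair of fractional linear (M\"obius) transformations determined only by $\sigma$ and the tail of the continued fraction expansion of $\rho$. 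Since $\sigma_1=\sigma_2$ and the rotation numbers coincide, the two renormalization sequences for $f_1$ and $f_2$ have the \emph{same} M\"obius limit, and the $C^{2+\alpha}$ hypothesis upgrades the convergence to a polynomial rate in $n$.

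The main obstacle, and the reason the hypotheses split into two cases, is converting this renormalization convergence into uniform convergence of the ratios $|\psi(I)|/|I|$. The difference of these ratios at consecutive generations $n$ and $n+1$ is controlled, via the distortion estimate, by a product of the shape
$$
\prod_{j\le n}\sigma^{\pm k_j},
$$
where the sign of the exponent alternates with the parity of $j$, reflecting on which side of $b_i$ the iterate of the break point falls under renormalization at step $j$. When $\sigma>1$, boundedness of this product is equivalent to uniform boundedness of the partial quotients that carry a positive exponent, i.e.\ the odd-indexed $k_{2n-1}$, which is precisely the condition $\rho\in M_o$; when $\sigma<1$ the symmetric statement gives $\rho\in M_e$. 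Under either hypothesis the error between consecutive ratio functions is geometrically summable, so the limit $\lim_n |\psi(I_n)|/|I_n|$ exists uniformly and defines $D\psi$, making $\psi$ a $C^1$-diffeomorphism. The delicate bookkeeping, on which the whole argument rests, is verifying that the location of the break of $f_i^{q_n}$ inside each atom of generation $n+1$ is indeed governed by the parity of $n$ and therefore produces the claimed alternating exponent pattern.
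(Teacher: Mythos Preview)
The paper does not prove this theorem. Theorem~\ref{ADM0} is quoted in the Introduction as a result of Teplinskii and Khanin \cite{TK2004}, listed among the background results that motivate the paper's own Theorem~\ref{ADM1}; no proof or even sketch of it appears anywhere in the text. Consequently there is nothing in the paper to compare your proposal against.

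As a separate remark on the proposal itself: your outline captures the right architecture of the Teplinskii--Khanin argument (renormalization towards M\"obius limits determined by $\sigma$ and the continued fraction tail, then summability of errors to get uniform convergence of $|\psi(I)|/|I|$), but the heuristic about products $\prod_{j\le n}\sigma^{\pm k_j}$ with signs alternating by parity is too loose to stand as a proof. In the actual argument the relevant quantity is not literally that product, and the role of $M_o$ versus $M_e$ enters through an asymmetry in how the renormalization map treats the break on even versus odd steps, which has to be tracked precisely rather than asserted. So the sketch identifies the right ingredients but the ``delicate bookkeeping'' you flag at the end is where essentially all the work lies, and it is not carried out here.
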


In the case of not coinciding jump ratios A. Dzhalilov, H. Akin and S. Temir \cite{DHT2010} proved  

\begin{theo}\label{DzAT} Let  $f_{i}\in C^{2+\alpha}(S^{1}\backslash\{{b_{i}}\}),  \ i=1,2$,
 be $P$- homeomorphisms each with one break
point $b_i $.  Assume 
\begin{itemize}
\item[(1)] their rotation numbers $\rho_{i}, \, i=1,2,$ are irrational and
coincide i.e. $\rho_{1}=\rho_{2}=\rho, \,\rho
\in \mathbb{R}^{1}\setminus\mathbb{Q}$;
 \item[(2)] their jump ratios
$\sigma_{f_{i}}(b_i), \, i=1,2,$ are positive and do not
coincide.
\end{itemize}
Then the homeomorphism $\psi$ conjugating $f_{1}$ and $f_{2}$ is a
singular function, i.e. $\psi$ is continuous on $S^{1}$
and $D\psi(x)=0$ a.e. with respect to Lebesgue measure  $\mu_L$.\\
\end{theo}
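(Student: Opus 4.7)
The strategy is to compare the lengths of corresponding elements of the dynamical partitions of $f_{1}$ and $f_{2}$ under the conjugacy $\psi$ and to show that their ratio does not converge to any positive finite limit at Lebesgue-a.e.\ point. Because $\rho(f_{1})=\rho(f_{2})=\rho$, the two systems share the same combinatorial structure: writing $p_{n}/q_{n}$ for the $n$-th convergent of $\rho$, the standard dynamical partition $\xi_{n}^{(i)}$ of $S^{1}$ of order $n$ is built from the first $q_{n}+q_{n-1}$ iterates of $f_{i}$, and the intertwining relation $\psi\circ f_{1}=f_{2}\circ\psi$ forces $\psi$ to send $\xi_{n}^{(1)}$ bijectively onto $\xi_{n}^{(2)}$, preserving the combinatorial label of each atom.

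Using the $C^{2+\alpha}$-smoothness of $\hat{f}_{i}$ on $S^{1}\setminus\{b_{i}\}$, I would first derive Denjoy-type distortion estimates via the cross-ratio technique: on every element of $\xi_{n}^{(i)}$ whose interior is disjoint from the break points of the iterate $\hat{f}_{i}^{q_{n}}$, the oscillation of $\log D\hat{f}_{i}^{q_{n}}$ is bounded uniformly in $n$, and adjacent atoms of $\xi_{n}^{(i)}$ have comparable Lebesgue lengths. This bounded distortion reduces the length ratio $|\Delta_{n}^{(2)}(\psi(x))|/|\Delta_{n}^{(1)}(x)|$, up to a multiplicative $O(1)$ error, to a ratio of iterated derivatives of the form $D\hat{f}_{2}^{k}(\psi(y))/D\hat{f}_{1}^{k}(y)$ at a suitable reference point $y$ and some $k\le q_{n}+q_{n-1}$.

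The decisive step is to analyse this ratio by decomposing $\log D\hat{f}_{i}$ into an $\alpha$-H\"{o}lder continuous summand and a jump of height $\log\sigma_{f_{i}}(b_{i})$ at $b_{i}$. The Birkhoff sum $\log D\hat{f}_{i}^{k}(y)=\sum_{j=0}^{k-1}\log D\hat{f}_{i}(f_{i}^{\,j}y)$ then splits into a tame piece, controllable through the Denjoy--Koksma inequality for functions of bounded variation, and a singular piece that records, with sign, how often the orbit crosses $b_{i}$, each crossing weighted by $\log\sigma_{f_{i}}(b_{i})$. Because $\psi$ intertwines the two dynamics, the combinatorial crossing counts for $f_{1}$ and $f_{2}$ are identical, yet they are weighted by the \emph{different} constants $\log\sigma_{f_{1}}(b_{1})\ne\log\sigma_{f_{2}}(b_{2})$. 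The main obstacle is to turn this non-cancellation into the divergence
$$
\bigl|\log D\hat{f}_{2}^{k}(\psi(y))-\log D\hat{f}_{1}^{k}(y)\bigr|\longrightarrow\infty
$$
along a subsequence of $n$ for Lebesgue-a.e.\ $x$; this amounts to a quantitative ergodic statement saying that for a Lebesgue-typical $x$ the number of visits of the $f_{i}$-orbit to a fixed small one-sided neighbourhood of $b_{i}$ in time $q_{n}$ grows roughly like $q_{n}$, and is to be obtained from unique ergodicity of $f_{i}$ combined with control on how visits to the break point and visits to dynamically defined neighbourhoods relate.

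Once this divergence of the logarithmic ratio is proved, the Lebesgue density theorem identifies $D\psi(x)$ with the limit of the length ratios $|\Delta_{n}^{(2)}(\psi(x))|/|\Delta_{n}^{(1)}(x)|$ whenever that derivative exists as a finite positive number, so on a set of full Lebesgue measure no finite positive value is possible. Since $\psi$ is monotone, $\int_{0}^{1} D\psi\,dx\le 1$, and hence $\{D\psi=+\infty\}$ is Lebesgue-null; therefore $D\psi=0$ Lebesgue-a.e., which is the claimed singularity of $\psi$.
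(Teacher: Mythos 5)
Your proposal takes a genuinely different route from the one in \cite{DHT2010} (of which the paper's proof of Theorem~\ref{ADM1} is the two-break-point analogue), and the decisive third step contains an error. You want $\bigl|\log D\hat{f}_{2}^{k}(\psi(y))-\log D\hat{f}_{1}^{k}(y)\bigr|\to\infty$ along $k=q_{n}$ (or $k\le q_{n}+q_{n-1}$) for typical $y$. That is impossible: the Denjoy inequality (Lemma~\ref{lem2.1}) bounds $|\log D\hat{f}_{i}^{q_{n}}(\hat{y})|$ uniformly in $n$ by the total variation $v_{i}$ of $\log D\hat{f}_{i}$, and for $k=q_{n}+q_{n-1}$ one simply splits the Birkhoff sum into two blocks of lengths $q_{n}$ and $q_{n-1}$. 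So the quantity you want to make diverge is in fact bounded by $v_{1}+v_{2}$ up to a constant. The confusion lies in the ``singular piece'' of your decomposition: the Birkhoff sum $\sum_{j<k}\log D\hat{f}(f^{j}y)$ at a fixed point $y$ does \emph{not} record signed crossings of $b$ weighted by $\log\sigma$; the jump of $\log D\hat{f}$ at $b$ affects that sum only if some $f^{j}y$ lands exactly on $b$. What does carry a factor of $\sigma$ is the \emph{difference} $\log D\hat{f}^{q_{n}}(x)-\log D\hat{f}^{q_{n}}(y)$ when $x$ and $y$ are close and separated by an $f^{j}$-preimage of $b$ with $j<q_{n}$.

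That difference is precisely what cross-ratio distortion measures, and it is what the actual proof exploits. Assuming $D\hat{\psi}(\hat{x}_{0})=\omega>0$, one takes the $q_{n}$-preimage $\overline{b}$ of the break point inside the fundamental intervals around $x_{0}$ and builds a quadruple $z_{1}\prec z_{2}\prec z_{3}\prec z_{4}$ with an endpoint of the middle gap at $\overline{b}$ and carefully scaled side intervals (cf.\ Lemma~\ref{lemm4.4}). Then Lemmas~\ref{lemm3.4} and \ref{lemm4.5} give $Dist(z_{1},\dots,z_{4};f_{1}^{q_{n}})\approx\sigma_{f_{1}}(b_{1})$ and $Dist(\psi(z_{1}),\dots,\psi(z_{4});f_{2}^{q_{n}})\approx\sigma_{f_{2}}(b_{2})$, while Lemmas~\ref{lemm4.2}--\ref{lemm4.3} together with the conjugacy identity for cross-ratios force these two distortions to be nearly equal, a contradiction. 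The ingredient your plan lacks is this placement of the test quadruple \emph{at a preimage of the break point}: without it, the cross-ratio distortion of $f^{q_{n}}$ is $1+o(1)$ (Lemma~\ref{lemm3.1} applied atom by atom), which is exactly what the Denjoy bound that defeats your divergence claim is telling you. Your concluding Lebesgue-density step would be sound if the divergence held, but it does not.
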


In the present paper we will extend this result to  circle homeomorphisms with
coinciding irrational rotation numbers having each two break points.
Our main result  is  the following
\begin{theo}\label{ADM1}
 Let  $f_{i}\in C^{2+\alpha}(S^{1}\backslash\{{a_{i},b_{i}}\}), i=1,2 ,$
be $P$- homeomorphisms each with two break points $ a_{i},b_{i}.$ Assume 

\begin{itemize}
\item[(1)] their rotation numbers $\rho(f_{i}), \ i=1,2,$ are irrational and
coincide i.e. $\rho(f_{1})=\rho(f_{2})=\rho, \
\rho\in\mathbb{R}^{1}\setminus\mathbb{Q}$;

\item[(2)] the products of their jump ratios $\sigma_{f_{i}}(a_{i})\cdot\sigma_{f_{i}}(b_{i})$ do not coincide i.e.
$\sigma_{f_{1}}(a_{1})\cdot\sigma_{f_{1}}(b_{1})\neq \sigma_{f_{2}}(a_{2})\cdot\sigma_{f_{2}}(b_{2});$

\item[(3)]
$\mu_{1}([a_{1},b_{1}])=\mu_{2}([a_{2},b_{2}]),$ where $\mu_{i}$
is the invariant probability measure of $f_{i}, i=1,2$.
\end{itemize}
Then the  map $\psi$ conjugating $f_{1}$ and $f_{2}$ is singular.
\end{theo}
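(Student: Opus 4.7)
The plan is to adapt to two break points the singularity-via-contradiction strategy used for one break point in \cite{DHT2010}. Assume, for contradiction, that $\psi$ is not singular. Since $\psi$ is a monotone homeomorphism, $D\psi$ exists Lebesgue-a.e., and non-singularity produces a set $E \subset S^1$ of positive Lebesgue measure together with density points $x_0 \in E$ at which $0 < D\psi(x_0) =: c < \infty$. Let $\mathcal{P}_n^{(i)}$ denote the rank-$n$ dynamical partition of $f_i$, based at $x_0$ for $i=1$ and at $\psi(x_0)$ for $i=2$. Since both systems share the rotation number $\rho$, these partitions are combinatorially identical and $\psi$ maps $\mathcal{P}_n^{(1)}$ onto $\mathcal{P}_n^{(2)}$ bijectively. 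The hypothesis $D\psi(x_0)=c$ at a density point then forces the ratio $|\Delta_k^{(n,2)}|/|\Delta_k^{(n,1)}|$ of matching intervals meeting $x_0$ to tend to $c$ as $n \to \infty$.

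The next step is to translate this length information into control of iterated derivatives. Using the Khanin--Vul cross-ratio inequalities together with the $C^{2+\alpha}$ smoothness off the break points, one obtains a Denjoy--Koksma-type distortion bound: $\log D\hat f_i^{q_n}$ has variation uniformly bounded in $n$ on each element of $\mathcal{P}_n^{(i)}$. This comparability shows that the convergence $|\Delta_k^{(n,2)}|/|\Delta_k^{(n,1)}| \to c$ forces $D\hat f_1^{q_n}(x_0)/D\hat f_2^{q_n}(\psi(x_0))$ to stay bounded above and below uniformly in $n$. To exploit this, one evaluates each iterated derivative by a Katznelson--Ornstein-type decomposition: the integrable contribution of $D^2\hat f_i$ on the smooth pieces gives an $O(1)$ term by summation by parts, while the two break points $a_i,b_i$ together produce a jump contribution of the form
$$m_i^{(n)}(x_0)\cdot \log\bigl(\sigma_{f_i}(a_i)\sigma_{f_i}(b_i)\bigr) + O(1),$$
where $m_i^{(n)}(x_0)$ counts how many iterated preimages of the pair $\{a_i,b_i\}$ lie in a reference interval determined by the rank-$n$ partition. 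That only the \emph{product} of jump ratios enters is due to equidistribution of orbits: $a_i$ and $b_i$ each contribute with the same multiplicity up to $O(1)$.

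The final and delicate step is to match $m_1^{(n)}(x_0)$ with $m_2^{(n)}(\psi(x_0))$ up to a bounded error, and here hypothesis (3) enters decisively. Since $\psi$ conjugates $f_1$ to $f_2$, one has $\mu_2([\psi(a_1),\psi(b_1)]) = \mu_1([a_1,b_1])$, and combining with (3) yields $\mu_2([\psi(a_1),\psi(b_1)]) = \mu_2([a_2,b_2])$. Reading this through the Denjoy conjugacy of $f_2$ to the rotation $f_\rho$, the pairs $\{\psi(a_1),\psi(b_1)\}$ and $\{a_2,b_2\}$ become translates of each other on the circle, and hence the counts of orbit visits to each pair coincide up to $O(1)$. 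Together with the preceding paragraph this gives $|m_1^{(n)}(x_0) - m_2^{(n)}(\psi(x_0))| = O(1)$. Since the products $\sigma_{f_i}(a_i)\sigma_{f_i}(b_i)$ differ by hypothesis (2) and $m_i^{(n)} \to \infty$, the ratio $D\hat f_1^{q_n}(x_0)/D\hat f_2^{q_n}(\psi(x_0))$ diverges to $0$ or $\infty$, contradicting the uniform boundedness from Step~2.

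The main obstacle I anticipate is precisely this alignment step: making the equality $|m_1^{(n)}(x_0) - m_2^{(n)}(\psi(x_0))| = O(1)$ quantitative and uniform in $n$. It requires tracking the exact positions of the four break points within the rank-$n$ dynamical partitions of both systems and exploiting that, in rotation coordinates, $(\psi(a_1),\psi(b_1))$ is a translate of $(a_2,b_2)$. A secondary but standard issue, already present in \cite{DK1998} and \cite{DLM2009}, is that $x_0$ must be chosen outside a Lebesgue-null exceptional set so that the distortion estimates apply uniformly at every scale; this is handled by discarding the union over $n$ and $k$ of small neighborhoods of the iterated preimages of the break points.
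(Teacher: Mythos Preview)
Your Steps 4--6 contain a fatal gap. By the Denjoy inequality (Lemma~\ref{lem2.1}), one has $e^{-v_i}\le D\hat f_i^{\,q_n}(\hat x_0)\le e^{v_i}$ uniformly in $n$, so $\log D\hat f_i^{\,q_n}(\hat x_0)$ is bounded. Your claimed decomposition $\log D\hat f_i^{\,q_n}(\hat x_0)=m_i^{(n)}(x_0)\log\bigl(\sigma_{f_i}(a_i)\sigma_{f_i}(b_i)\bigr)+O(1)$ with $m_i^{(n)}\to\infty$ is therefore impossible: the left-hand side is $O(1)$. More bluntly, the ratio $D\hat f_1^{\,q_n}(x_0)\big/D\hat f_2^{\,q_n}(\psi(x_0))$ lies between $e^{-(v_1+v_2)}$ and $e^{v_1+v_2}$ for \emph{every} $x_0$, whether or not $\psi$ is differentiable there; the boundedness you extract in Step~3 carries no information about $\psi$, and nothing can be made to diverge. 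Any splitting of $\log D\hat f_i^{\,q_n}$ into a ``smooth part'' and a ``jump part'' will have both parts large and cancelling, which is precisely why the pointwise derivative of $f_i^{q_n}$ is the wrong invariant here.

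The paper avoids this by working with cross-ratio distortion rather than derivatives. One constructs a quadruple $z_1\prec z_2\prec z_3\prec z_4$ near $x_0$ with $l([z_1,z_2])/l([z_2,z_3])$ of order $\varepsilon^{1/4}$, positioned so that the $q_n$-preimages of \emph{both} $a_1$ and $b_1$ land in the short side $[z_1,z_2]$ under suitable iterates $f_1^l,f_1^p$. By Lemma~\ref{lemm3.4} and the multiplicativity of distortion, $Dist(z_1,z_2,z_3,z_4;f_1^{q_n})$ is then close to $\sigma_{f_1}(a_1)\sigma_{f_1}(b_1)$ (Lemma~\ref{lemm4.5}); differentiability of $\psi$ at $x_0$ forces $Dist(\cdot\,;\psi)\approx1$ on such configurations (Lemma~\ref{lemm4.3}), and a short algebraic identity converts this into $\sigma_{f_1}(a_1)\sigma_{f_1}(b_1)\approx\sigma_{f_2}(a_2)\sigma_{f_2}(b_2)$, the contradiction. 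Your reading of hypothesis~(3) is also weaker than what is actually needed and available: since $\psi_*\mu_1=\mu_2$ and one normalizes $\psi(a_1)=a_2$, condition~(3) yields $\psi(b_1)=b_2$ \emph{exactly}, which guarantees that the break points $b_1,b_2$ occur at the \emph{same} iterate $p$ on both circles---not merely that two orbit-visit counts agree up to $O(1)$.
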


\section{Preliminaries and Notations}

Consider  an orientation preserving circle homeomorphism $f$
with lift $\hat{f}$ and irrational rotation number
$\rho=\rho_{f}$. 
If the rotation number $\rho$ has the 
continued fraction expansion
$\rho=\left[k_{1},k_{2},...,k_{n},...\right]=
1/\left(k_{1}+ 1/\left( k_{2}+...+1/\left(k_{n}+...\right) \right)
\right) $ its convergents $p_n/q_n,\, n \in \mathbb{N},$ are defined by
$p_{n}/q_{n}=[k_{1},k_{2},...,k_{n}] $.
Then the denominators $q_{n}$ satisfy the well known recursion
relation $q_{n+1}=k_{n+1}q_{n}+q_{n-1}, \ n\geq 1, \ q_{0}=1, \
q_{1}=k_{1}$.

For an arbitrary point $x_{0}\in S^{1}$ define
$\Delta_{0}^{(n)}(x_{0})$ as the closed interval in $S^1$ with endpoints
$x_{0}$ and $x_{q_{n}}=f^{q_n}(x_0)$, such that for $n$ odd $x_{q_{n}}$ is to the
left of $x_{0}$ and for $n$ even it is to its right with respect to the orientation induced from the real line. Denote by
$\Delta_{i}^{(n)}(x_{0}):=f^{i}(\Delta_{0}^{(n)}(x_{0})), i \ge 1$, the
iterates of the
interval $\Delta_{0}^{(n)}(x_{0})$ under $f$.
It is well known, that the set $\xi_n(x_0)$ of
intervals with mutually disjoint interiors defined as
\begin{equation}\label{eq2}
\xi_{n}(x_{0})=\left\lbrace \Delta_{i}^{(n-1)}(x_{0}), \ 0\leq i<q_{n}\right\rbrace \cup
\left\lbrace \Delta_{j}^{(n)}(x_{0}), \ 0\leq j<q_{n-1}\right\rbrace
\end{equation}
determines a partition of the circle for any $n$.
The partition $\xi_{n}(x_{0})$ is called the $n$-th \textbf{dynamical
partition} of the point
$x_{0}$ with \textbf{generators} $\Delta_{0}^{(n-1)}(x_{0})$ and
$\Delta_{0}^{(n)}(x_{0})$. Obviously, the partition $\xi_{n+1}(x_0)$ is a
refinement of the partition $\xi_{n}(x_{0})$: indeed the intervals of
order $n$ belong to $\xi_{n+1}(x_0)$ and each interval $
\Delta_{i}^{(n-1)}(x_{0})\in \xi_{n}(x_{0}), \, \ 0\leq
i<q_{n},$ is partitioned into $k_{n+1}+1$ intervals belonging to
$\xi_{n+1}(x_0)$ such that
\begin{eqnarray*}
\Delta_{i}^{(n-1)}(x_{0})=\Delta_{i}^{(n+1)}(x_{0})\cup\bigcup_{s=0}^{k_{n+1}-1}\Delta_{i+q_{n-1}+sq_{n}}^{(n)}(x_{0}).
\end{eqnarray*}

Recall the following definition introduced in \cite{KO1989}:
\begin{defi}\label{defKO} An interval
$I=(x,y)\subset S^1$ is $q_{n}$-small and its endpoints
$x,y$ are $q_{n}$-close if the intervals
$f^{i}(I),\ 0\leq i<q_{n},$ are disjoint.
\end{defi}

It is clear that the interval $(x,y)$ is $q_{n}$-small if,
depending on the parity of $n,$ either $y\prec x\preceq
f^{q_{n-1}}(y)\prec y\; \text{or}\;  f^{q_{n-1}}(x)\preceq
y\prec x\prec f^{q_{n-1}}(x)$ in the order induced from the real line.

Then we can show
\begin{lemm}\label{lemm2.2} Let $f$ be a P-homeomorphism with a
finite number
 of break points $b_i, i=1,2,...,m$, and irrational rotation
number $\rho$. Assume
 $x,y\in S^{1}$ are $q_{n}$-close and $b_i\notin\left\lbrace
f^{j}(x),\,\,f^{j}(y), \,0\leq j<q_{n},\right\rbrace $,\,$i=1,2,...,m$. Then for
any $0\leq k < q_{n}$ the following inequality holds:
\begin{equation}\label{eq5}
e^{-v}\leq \frac {D\hat{f}^{k}(\hat{x})}{D \hat{f}^{k}(\hat{y})}\leq e^{v}.
\end{equation} where $v$ is the total variation of $\log D\hat{f}$ on $[0,1]$ and
$\hat{x},\,\hat{y}$ are the lifts of $x,\,y$ to the interval $[0,1).$
\end{lemm}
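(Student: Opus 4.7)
The plan is to reduce the estimate to a sum of variations over pairwise disjoint subarcs of $S^{1}$, using the chain rule together with the $q_{n}$-smallness hypothesis.

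First, because by hypothesis none of the points $\hat f^{j}(\hat x),\hat f^{j}(\hat y)$ with $0\le j<q_{n}$ is a break point, $D\hat f$ is genuinely differentiable at each evaluation point and the chain rule for $\hat f^{k}$ gives
\[
\log\frac{D\hat f^{k}(\hat x)}{D\hat f^{k}(\hat y)}
=\sum_{j=0}^{k-1}\bigl(\log D\hat f(\hat f^{j}(\hat x))-\log D\hat f(\hat f^{j}(\hat y))\bigr),
\]
so the left-hand side is a well-defined finite sum.

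Second, I would bound each summand by the variation of $\log D\hat f$ over the arc $J_{j}:=f^{j}([x,y])\subset S^{1}$,
\[
\bigl|\log D\hat f(\hat f^{j}(\hat x))-\log D\hat f(\hat f^{j}(\hat y))\bigr|\le Var_{J_{j}}(\log D\hat f).
\]
The interior of $J_{j}$ may well contain break points, but those contribute only jumps which are already included in the total variation, so this estimate is not affected.

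Third — the decisive use of the hypothesis — by the definition of $q_{n}$-smallness the arcs $f^{j}([x,y])$ for $0\le j<q_{n}$ have pairwise disjoint interiors in $S^{1}$, and since $k\le q_{n}-1$ the subarcs $J_{0},\ldots,J_{k-1}$ are in particular pairwise disjoint. Additivity of the variation over disjoint subintervals of $[0,1)$ then yields
\[
\sum_{j=0}^{k-1}Var_{J_{j}}(\log D\hat f)\;\le\;Var_{[0,1]}(\log D\hat f)\;=\;v,
\]
and exponentiation gives the claimed two-sided bound.

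Insofar as there is an obstacle at all, it is only the bookkeeping needed to make sure the arcs $J_{j}$ lift to honest (possibly split) subintervals of $[0,1)$ so that additivity of $Var$ applies; this is routine, since any $J_{j}$ wrapping across the identification point $0\sim 1$ may be cut into its two pieces without affecting the sum, variations being nonnegative. The substance of the lemma is really just the combination of the chain rule with the $q_{n}$-smallness of $(x,y)$.
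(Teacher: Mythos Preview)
Your proof is correct and follows essentially the same route as the paper's: chain rule to get a telescoping sum, then bound each term by the variation of $\log D\hat f$ over the iterated arc $f^{j}([x,y])$, and use the $q_{n}$-smallness (disjointness) to control the total by $v$. The paper's version is terser but identical in substance.
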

\begin{proof} Take any two $q_{n}$-close points $x,y\in S^{1}$
and $0\leq k< q_{n}$. Denote by $I$ the open interval with
endpoints $x$ and $y$. Because the intervals $f^{s}(I),\ 0\leq
s<k$ are disjoint, we obtain
\begin{displaymath}
|\log D\hat{f}^{k}(\hat{x})-\log D\hat{f}^{k}(\hat{y})|\le \sum_{s=0}^{k-1}|\log
D\hat{f}(\hat{f}^{s}(\hat{x}))-\log D\hat{f}(\hat{f}^{s}(\hat{y}))|\le v,
\end{displaymath}
from which inequality (\ref{eq5}) follows immediately.
\end{proof}
The following Lemma can be proven easily using  Lemma \ref{lemm2.2}.
\begin{lemm}\label{lem2.1} Let $f$ be a P-homeomorphism with a
finite number
 of break points $b_i, \, i=1,2,...,m,$ and irrational rotation
number $\rho$.
If $x_{0}\in S^{1},$ $ n\ge 1$ and $b_i\notin\left\lbrace
f^{j}(x_{0}), 0\leq j<q_{n}\right\rbrace $ for $i=1,2,...,m$, then
\begin{equation}\label{eq3}
e^{-v}\leq \prod_{i=0}^{q_{n}-1}D\hat{f}(\hat{f}^{i}(\hat{x}_{0}))\leq e^{v}.
\end{equation}

\end{lemm}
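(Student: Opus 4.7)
By the chain rule the product equals $D\hat{f}^{q_n}(\hat{x}_0)$, so the plan is to bound this derivative in $[e^{-v},e^v]$. I would combine Lemma \ref{lemm2.2} with the periodicity identity
\begin{equation*}
\int_0^1 D\hat{f}^{q_n}(\hat{y})\,d\hat{y} \;=\; \hat{f}^{q_n}(1)-\hat{f}^{q_n}(0) \;=\; 1,
\end{equation*}
which follows from $\hat{f}^{q_n}(\hat{y}+1) = \hat{f}^{q_n}(\hat{y})+1$. The proof of Lemma \ref{lemm2.2} extends verbatim to the boundary case $k=q_n$, since it only uses disjointness of $f^{s}(I)$ for $0\le s<k$, which is exactly $q_n$-closeness when $k=q_n$.

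Next I would exhibit $q_n$-small neighborhoods of $\hat{x}_0$: the two generators $\Delta_0^{(n-1)}(x_0)$ and $\Delta_0^{(n)}(x_0)$ meeting at $x_0$ both qualify, since their first $q_n$ iterates are partition elements of $\xi_n(x_0)$ (respectively of $\xi_{n+1}(x_0)$) and hence mutually disjoint. Applying the mean value theorem to $\hat{f}^{q_n}$ on each generator produces points $\hat{\eta}_k$ satisfying $D\hat{f}^{q_n}(\hat{\eta}_k) = |f^{q_n}(\Delta)|/|\Delta|$, and then Lemma \ref{lemm2.2} applied to the $q_n$-close pair $(\hat{x}_0,\hat{\eta}_k)$ yields
\begin{equation*}
D\hat{f}^{q_n}(\hat{x}_0) \;\in\; [e^{-v},e^v]\cdot\frac{|\Delta_{q_n}^{(n-1)}(x_0)|}{|\Delta_0^{(n-1)}(x_0)|} \;\cap\; [e^{-v},e^v]\cdot\frac{|\Delta_{q_n}^{(n)}(x_0)|}{|\Delta_0^{(n)}(x_0)|}.
\end{equation*}

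The conclusion then follows once at least one of the two ratios on the right is shown to be $\le 1$ and at least one to be $\ge 1$. This uses the global identity $\sum_{J\in\xi_n(x_0)} |f^{q_n}(J)| = \sum_{J\in\xi_n(x_0)} |J| = 1$ (since $\hat{f}^{q_n}$ maps $\xi_n(x_0)$ to another partition of $S^1$), which forces the ratio $|f^{q_n}(J)|/|J|$ to take values both $\ge 1$ and $\le 1$ as $J$ ranges over $\xi_n(x_0)$. The hard part will be to localise this sign change to the two generators at $\hat{x}_0$; if it instead occurs on a distant $J\in\xi_n(x_0)$, one propagates back using the identity $D\hat{f}^{q_n}(\hat{x}_i)/D\hat{f}^{q_n}(\hat{x}_0) = D\hat{f}^{i}(\hat{x}_{q_n})/D\hat{f}^{i}(\hat{x}_0)$ (cancellation of common chain-rule factors), which Lemma \ref{lemm2.2} applied to the $q_{n+1}$-close endpoints $(\hat{x}_0,\hat{x}_{q_n})$ of $\Delta_0^{(n)}(x_0)$ bounds by $e^v$ for $0\le i\le q_n$. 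Care must then be taken to avoid doubling the variation budget during this transfer, so as to preserve the sharp $[e^{-v},e^v]$ rather than settling for $[e^{-2v},e^{2v}]$.
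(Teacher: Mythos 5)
Most of the individual ingredients you assemble are sound: the reduction to bounding $D\hat f^{q_n}(\hat x_0)$, the observation that the proof of Lemma \ref{lemm2.2} extends verbatim to $k=q_n$ for a $q_n$-small interval, the chain-rule identity $D\hat{f}^{q_n}(\hat{x}_i)/D\hat{f}^{q_n}(\hat{x}_0)=D\hat{f}^{i}(\hat{x}_{q_n})/D\hat{f}^{i}(\hat{x}_0)$, and the existence (via $\int_0^1 D\hat f^{q_n}(\hat y)\,d\hat y=1$, or the partition-sum identity) of points where $D\hat f^{q_n}\ge 1$ and points where $D\hat f^{q_n}\le 1$. Note that the paper itself supplies no argument here: it only remarks that the proof is as for circle diffeomorphisms and points to \cite{KS1989}, so the relevant comparison is with the classical Denjoy estimate.

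The genuine gap is exactly the one you flag and do not close. Nothing forces the sign change of $l(f^{q_n}(J))/l(J)$ to occur on one of the two generators based at $x_0$ (their images $[x_{q_n},x_{q_n+q_{n-1}}]$ and $[x_{2q_n},x_{q_n}]$ bear no a priori length relation to the generators themselves), so in general you must fall back on the transfer from a distant $J\in\xi_n(x_0)$: first from the anchor point $\eta\in J$ to an endpoint $x_i$ of $J$ (one application of the extended Lemma \ref{lemm2.2}, cost $e^{\pm v}$), then from $x_i$ to $x_0$ via the chain-rule identity (a second application, cost $e^{\pm v}$). These two estimates run over two different families of disjoint intervals which overlap on the circle, so their variation budgets cannot be merged into a single $v$; what your argument actually delivers is $e^{-2v}\le D\hat f^{q_n}(\hat x_0)\le e^{2v}$, not the stated inequality. (The weaker constant would in fact suffice for every later use in this paper, where only ``some constant depending on $v$'' ever matters, but it does not prove the lemma as stated.) The standard proof achieving the sharp constant is organized so that the total variation is spent exactly once: one writes $\log D\hat f^{q_n}(\hat x_0)=\sum_{i=0}^{q_n-1}\bigl(\log D\hat f(\hat x_i)-\log D\hat f(\hat\theta_i)\bigr)+\sum_{i=0}^{q_n-1}\log D\hat f(\hat\theta_i)$ with all the $\theta_i$ chosen, via the mean value theorem, inside one single family of pairwise disjoint intervals $f^i(I)$ each containing the corresponding $x_i$, so that the first sum is bounded by $v$ and the second telescopes to a directly controlled length ratio; no second distortion transfer occurs. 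To complete your proof you must either carry out such a one-pass comparison or weaken the constant in the statement.
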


Inequality (\ref{eq3}) is called the \textbf{Denjoy inequality}.
The proof of Lemma 2.3 is as for circle diffeomorphisms
(see for instance \cite{KS1989}). Using Lemma 2.3 it can
be shown that the intervals of the dynamical
partition $\xi_{n}(x_0)$ in (\ref{eq2}) have exponentially small length .
Indeed one finds

\begin{cor}\label{cor2} Let $\Delta^{(n)}$ be an arbitrary element of
 the dynamical partition $\xi_{n}(x_{0})$. Then
\begin{equation}\label{eq4}
l(\Delta^{(n)}):=\mu_L(\Delta^{(n)}) \leq const \,\, \lambda^{n},
\end{equation}
where $\lambda=(1+e^{-v})^{-1/2}<1.$
\end{cor}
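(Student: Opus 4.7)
The plan is the standard Denjoy-type argument for the dynamical partition. First, I reduce the problem to estimating the lengths of the two generators $\Delta_0^{(n-1)}(x_0)$ and $\Delta_0^{(n)}(x_0)$: every $\Delta^{(n)}\in\xi_n(x_0)$ is an iterate $f^j(\Delta_0^{(n-1)}(x_0))$ for some $0\le j<q_n$ or $f^j(\Delta_0^{(n)}(x_0))$ for some $0\le j<q_{n-1}$. Since by the construction of $\xi_n$ the corresponding iterates of each generator are pairwise disjoint, both generators are $q_n$-small. Lemma \ref{lemm2.2} then yields bounded distortion of $D\hat f^j$ on each generator, so $l(\Delta_j^{(n-1)}(x_0))\le e^v D\hat f^j(\hat y)\,l(\Delta_0^{(n-1)}(x_0))$ for any $\hat y$ in the generator; combined with $\sum_{j=0}^{q_n-1}l(\Delta_j^{(n-1)}(x_0))\le 1$, this reduces the corollary to obtaining the exponential bound on $l(\Delta_0^{(n-1)}(x_0))$ and $l(\Delta_0^{(n)}(x_0))$.

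For the central step I define $P_n:=l(\Delta_0^{(n-1)}(x_0))+l(\Delta_0^{(n)}(x_0))$ and prove the two-step recursion $P_{n+2}\le P_n/(1+e^{-v})$. The refinement identity displayed in the preliminaries,
$$
\Delta_0^{(n-1)}(x_0)=\Delta_0^{(n+1)}(x_0)\;\sqcup\;\bigsqcup_{s=0}^{k_{n+1}-1}\Delta_{q_{n-1}+sq_n}^{(n)}(x_0),
$$
implies $l(\Delta_0^{(n-1)})\ge l(\Delta_0^{(n+1)})+l(\Delta_{q_{n-1}}^{(n)})$ by retaining only the $s=0$ piece. To bound $l(\Delta_{q_{n-1}}^{(n)})=\int_{\Delta_0^{(n)}}D\hat f^{q_{n-1}}$ from below, I apply Lemma \ref{lemm2.2} on the $q_{n+1}$-small interval $\Delta_0^{(n)}$ (valid since $q_{n-1}<q_{n+1}$) to get bounded distortion of $D\hat f^{q_{n-1}}$, and Lemma \ref{lem2.1} at $x_0$ to get $D\hat f^{q_{n-1}}(\hat x_0)\ge e^{-v}$. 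Pivoting the distortion estimate at the common endpoint $x_0$ (so that the total variation is spent only once) produces $l(\Delta_{q_{n-1}}^{(n)})\ge e^{-v}l(\Delta_0^{(n)})$, hence $l(\Delta_0^{(n-1)})\ge l(\Delta_0^{(n+1)})+e^{-v}l(\Delta_0^{(n)})$. The symmetric estimate gives $l(\Delta_0^{(n)})\ge l(\Delta_0^{(n+2)})+e^{-v}l(\Delta_0^{(n+1)})$; adding and using the nesting monotonicity $P_{n+1}\ge P_{n+2}$ yields $P_n\ge(1+e^{-v})P_{n+2}$, whence $P_n\le C\lambda^n$ with $\lambda=(1+e^{-v})^{-1/2}$.

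The main obstacle is precisely this last bookkeeping: one has to pick out the correct $q_m$-small interval on which Lemma \ref{lemm2.2} applies for each relevant $k$, and choose the pivot point of the bounded-distortion estimate so that the total variation $v$ is spent only once (a naive estimate, which applies Lemma \ref{lemm2.2} and Lemma \ref{lem2.1} independently, yields only the weaker lower bound $e^{-2v}l(\Delta_0^{(n)})$, producing $\lambda=(1+e^{-2v})^{-1/2}$). A secondary technical point is that Lemma \ref{lem2.1} requires the orbit $\{f^j(x_0)\}_{j<q_n}$ to avoid the finitely many break points; this holds for $x_0$ off a countable set, and the estimate is extended to all $x_0$ by the continuity of $l(\Delta^{(n)})$ in $x_0$. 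Once these points are settled, combining the reduction step with $P_n\le C\lambda^n$ gives $l(\Delta^{(n)})\le \text{const}\cdot\lambda^n$ for every $\Delta^{(n)}\in\xi_n(x_0)$, as claimed.
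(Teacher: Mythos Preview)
The paper does not actually prove this corollary; it merely states it as a standard consequence of the Denjoy inequality (Lemma~\ref{lem2.1}). Your argument is the classical one and is essentially correct, with two points worth tightening.

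First, the reduction step as you phrase it does not work. From bounded distortion you get $l(\Delta_j^{(n-1)})\le e^v D\hat f^j(\hat y)\,l(\Delta_0^{(n-1)})$, and combining this with $\sum_j l(\Delta_j^{(n-1)})\le 1$ does not bound any individual $l(\Delta_j^{(n-1)})$ in terms of $l(\Delta_0^{(n-1)})$, because $D\hat f^j(\hat y)$ is not controlled for intermediate $j$. The clean reduction is the identity $\Delta_j^{(m)}(x_0)=\Delta_0^{(m)}(f^j(x_0))$: once the generator bound $l(\Delta_0^{(n-1)}(y))+l(\Delta_0^{(n)}(y))\le C\lambda^n$ is established for every base point $y$ (which your recursion does, uniformly in $y$), it applies to $y=f^j(x_0)$ and bounds every element of $\xi_n(x_0)$.

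Second, the ``pivoting'' manoeuvre you describe to avoid the constant $e^{-2v}$ is unnecessary. Lemma~\ref{lem2.1} is stated for an arbitrary base point, so $D\hat f^{q_{n-1}}(\hat y)\ge e^{-v}$ holds for every $y$ outside the countable set of preimages of break points; hence
\[
l(\Delta_{q_{n-1}}^{(n)})=\int_{\Delta_0^{(n)}}D\hat f^{q_{n-1}}(\hat y)\,d\hat y\ge e^{-v}\,l(\Delta_0^{(n)})
\]
directly, with no appeal to Lemma~\ref{lemm2.2}. The recursion $P_n\ge P_{n+2}+e^{-v}P_{n+1}\ge(1+e^{-v})P_{n+2}$ and the value $\lambda=(1+e^{-v})^{-1/2}$ then follow exactly as you write.
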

From Corollary \ref{cor2} it follows that the trajectory of every
point $x\in S^1$ is dense in $S^{1}$. This together with
monotonicity of the homeomorphism $f$ implies the following

\begin{theo}\label{Denjoy} Suppose that a homeomorphism $f$ satisfies
the conditions of Lemma 2.3. Then $f$ is topologically
conjugate to
the linear rotation $f_{\rho}$.
\end{theo}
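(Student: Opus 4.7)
The plan is to deduce topological conjugacy by the classical Denjoy argument, whose non-trivial input -- density of every orbit -- is already supplied by Corollary \ref{cor2}. First I would fix $x_{0} \in S^{1}$ and observe that $\xi_{n}(x_{0})$ is a partition of $S^{1}$ all of whose intervals have Lebesgue length at most $\mathrm{const}\cdot\lambda^{n} \to 0$. Since every such interval has endpoints in the forward orbit $O^{+}(x_{0}) = \{f^{k}(x_{0}) : k\ge 0\}$, every $y \in S^{1}$ lies within distance $\mathrm{const}\cdot\lambda^{n}$ of a point of $O^{+}(x_{0})$ for each $n$, and hence $O^{+}(x_{0})$ is dense in $S^{1}$.

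Next I would build the conjugacy from this density. Since $\rho$ is irrational, both the full orbit $O(x_{0}) = \{f^{n}(x_{0}) : n \in \Z\}$ and the $f_{\rho}$-orbit $O_{\rho}(0) = \{n\rho \bmod 1 : n \in \Z\}$ consist of pairwise distinct points and are dense in $S^{1}$. Set
\[
\varphi(f^{n}(x_{0})) := n\rho \bmod 1, \qquad n \in \Z,
\]
which is well defined on $O(x_{0})$. The key combinatorial input is that $\varphi$ preserves cyclic order: for any triple $m_{1}, m_{2}, m_{3} \in \Z$, the cyclic order of $f^{m_{1}}(x_{0}), f^{m_{2}}(x_{0}), f^{m_{3}}(x_{0})$ on $S^{1}$ agrees with that of $m_{1}\rho, m_{2}\rho, m_{3}\rho$. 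This is a purely combinatorial consequence of $\rho(f) = \rho$, provable by induction using the continued-fraction recursion for $q_{n}$, and requires no regularity of $f$.

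Once order-preservation and density on both sides are in place, $\varphi$ extends uniquely to a monotone, orientation-preserving surjection $\varphi : S^{1} \to S^{1}$, and the two density statements force this extension to be both continuous and injective, hence a homeomorphism. The conjugacy relation $\varphi \circ f = f_{\rho} \circ \varphi$ holds on $O(x_{0})$ by construction and extends to all of $S^{1}$ by continuity. The main obstacle in the whole argument is already neutralised by the preceding material: the Denjoy inequality of Lemma \ref{lem2.1} together with Corollary \ref{cor2} rules out wandering intervals, which is the step where the bounded-variation hypothesis of Lemma \ref{lemm2.2} is essential; the remaining order-preservation step is standard and has nothing to do with the break-point structure of $f$.
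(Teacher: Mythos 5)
Your proposal is correct and follows the same route the paper indicates: the paper merely remarks that Corollary \ref{cor2} gives density of every orbit and that this, together with monotonicity, yields the conclusion, and you have spelled out exactly that classical Poincar\'e--Denjoy argument (order-isomorphism of the orbit with the rotation orbit, extension by density, and continuity/injectivity from density on both sides). No divergence from the paper's approach, just a fuller writing-out of the details it leaves implicit.
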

In the following discussion we have to compare
different intervals. For this we use
\begin{defi}\label{Def2.3} {\rm
Let $C>1$. We call two intervals of $S^{1}$ \textbf{C-comparable } if
the ratio of their lengths is in $[C^{-1}, C].$}
\end{defi}

 \begin{cor}\label{cor3}  Suppose the homeomorphism $f$ satisfies the conditions of
Lemma 2.3. Then for any interval $I\subset S^{1}$ the intervals
$I$ and $f^{q_{n}}(I)$ are $e^{v}$-comparable. If the interval $I$
is $q_{n}-$small then $l(f^{i}(I))< const \, \lambda^{n} $ for all
$i=0,1, ...,q_{n}-1$.
\end{cor}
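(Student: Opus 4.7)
The corollary contains two independent claims, and my plan is to dispatch them separately using the tools just developed in this section.

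For the first claim, the key point is that, since $D\hat{f}$ is bounded above by $c_2$, the lift $\hat{f}^{q_n}$ is Lipschitz and therefore absolutely continuous, so the change-of-variables identity $l(f^{q_n}(I)) = \int_I D\hat{f}^{q_n}(\hat{t})\, d\hat{t}$ holds. The set of $t \in I$ whose orbit $\{f^j(t)\}_{j=0}^{q_n-1}$ visits a break point is countable and hence Lebesgue-null, so Lemma \ref{lem2.1} applies at almost every point and delivers $e^{-v} \le D\hat{f}^{q_n}(\hat{t}) \le e^v$ a.e.\ on $I$. Integrating this pointwise bound over $I$ gives $e^{-v}\,l(I) \le l(f^{q_n}(I)) \le e^{v}\,l(I)$, which is exactly the asserted $e^v$-comparability of $I$ and $f^{q_n}(I)$.

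For the second claim, the geometric idea is that any $q_n$-small interval fits inside a single generator of a suitable dynamical partition. Writing $I = (x,y)$, the equivalent formulation of $q_n$-smallness recalled right after Definition \ref{defKO} forces, after choosing the endpoint $x_0 \in \{x,y\}$ determined by the parity of $n$, the other endpoint of $I$ to lie between $x_0$ and $f^{q_{n-1}}(x_0)$. Consequently $I \subset \Delta_0^{(n-1)}(x_0)$. Since $f$ is an orientation-preserving homeomorphism, applying $f^i$ for $0 \le i < q_n$ yields $f^i(I) \subset f^i(\Delta_0^{(n-1)}(x_0)) = \Delta_i^{(n-1)}(x_0)$, and because $\Delta_i^{(n-1)}(x_0)$ is an element of the $n$-th dynamical partition $\xi_n(x_0)$, Corollary \ref{cor2} bounds its length by a constant multiple of $\lambda^{n-1}$. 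Absorbing the factor $\lambda^{-1}$ into the constant produces $l(f^i(I)) < const\cdot\lambda^n$, as required.

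The only step requiring more than a routine invocation of the results already proven is the inclusion $I \subset \Delta_0^{(n-1)}(x_0)$ in the second claim: one must pay attention to the orientation convention for $\Delta_0^{(n-1)}$, which depends on the parity of $n-1$, and match it to the two parity-dependent cases in the $q_n$-small characterization, thereby picking the correct endpoint $x_0$. Once this geometric inclusion is in place, both claims follow directly from Lemma \ref{lem2.1} (Denjoy) and Corollary \ref{cor2} (exponential decay of partition elements), and no further estimation is needed.
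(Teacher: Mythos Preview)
The paper states this corollary without proof, so there is no argument of its own to compare against. Your proof is correct and follows the natural route.

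For the first claim, integrating the pointwise Denjoy bound from Lemma~\ref{lem2.1} over $I$ is exactly the right idea; the observation that $\hat f^{q_n}$ is Lipschitz (hence absolutely continuous) and that only countably many points have an iterate hitting a break point cleanly justifies the change of variables.

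For the second claim, your inclusion $I\subset \Delta_0^{(n-1)}(x_0)$ followed by Corollary~\ref{cor2} is the intended mechanism. One small remark: the sentence after Definition~\ref{defKO} is phrased only as a sufficient condition (``is $q_n$-small \emph{if} \ldots''), whereas you invoke it as an equivalence. The direction you actually need is the converse, and it is immediate: $q_n$-smallness forces in particular $I\cap f^{q_{n-1}}(I)=\emptyset$ (since $0<q_{n-1}<q_n$), and this disjointness is precisely what pins the second endpoint inside $\Delta_0^{(n-1)}(x_0)$. You essentially say this in your last paragraph, so the argument is complete once that one line is made explicit.
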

\section{The Cross-ratio Tools}
Let us first recall two definitions:
\begin{defi}\label{crossratio} {\rm The \textbf{cross-ratio }
$Cr(a_{1},a_{2},a_{3},a_{4})$ of four strictly ordered points $a_{i}\in \mathbb{R}$, $i=1,2,3,4$,
 is defined as
$$
Cr(a_{1},a_{2},a_{3},a_{4})=\frac{(a_{2}-a_{1})(a_{4}-a_{3})}{(a_{3}-a_{1})(a_{4}-a_{2})}.
$$}
\end{defi}
\begin{defi}\label{distortion} {\rm
The \textbf{cross-ratio distortion} $Dist(a_{1},a_{2},a_{3},a_{4};f)$ of
four strictly ordered points
 $a_{i}\in \mathbb{R}$, $i=1,2,3,4$
with respect to a strictly increasing function $f$ on $
\mathbb{R}$ is defined as
$$
Dist(a_{1},a_{2},a_{3},a_{4};f)=\frac{Cr(f(a_{1}),f(a_{2}),f(a_{3}),f(a_{4}))}
{Cr(a_{1},a_{2},a_{3},a_{4})}.
$$}
\end{defi}
 For $k\geq 3$ let $\hat{z}_{i}\in [a,a+1]\subset \mathbb{R} \,\,,\,i=1,{...},k$ be the lifts of the points $z_{i}\in S^{1} , i=1,{...},k, $ with  
  $z_{1}\prec{z_{2}}\prec{...}\prec{z_{k}}\prec{z_{1}}$ such that  $\hat{z}_{1}<{\hat{z}_{2}}<{...}<{\hat{z}_{k}}.$
 The vector $(\hat{z}_{1},\hat{z}_{2},...,\hat{z}_{k})\in \mathbb{R}^{k} $
is called the \textbf{lifted vector} of $(z_{1},z_{2},...,z_{k})\in (S^1)^{k}$.
 Consider a circle homeomorphism $f$ with lift $\hat{f}$. We
 define the cross-ratio distortion of $(z_{1},z_{2},z_{3},z_{4})$
 with respect to $f$ by
 $Dist(z_{1},z_{2},z_{3},z_{4};f):=Dist(\hat{z}_{1},\hat{z}_{2},\hat{z}_{3},\hat{z}_{4};\hat{f})$
 where $(\hat{z}_{1},\hat{z}_{2},\hat{z}_{3},\hat{z}_{4})$ is the
 lifted vector of $(z_{1},z_{2},z_{3},z_{4})$. We need the following
\begin{lemm}\label{lemm3.1}\emph{(see \cite{DK1998})}
 Suppose $f$ is a $P$-homeomorphism with a finite number of break points and
 $f\in C^{2+\alpha}(S^{1}\backslash BP(f))$ for some $\alpha>0$.  Consider
  any four points
$z_{i}\in{S^{1}}$, $i=1,2,3,4$, with
$z_{1}\prec{z_{2}}\prec{z_{3}}\prec{z_{4}}\prec{z_{1}}$ and $[z_{1},z_{4}]\subset{S^{1}\backslash BP(f)} $
Then
$$|Dist(z_{1},z_{2},z_{3},z_{4};f)-1|\leq{K|\hat{z}_{4}-\hat{z}_{1}|^{1+\alpha}}$$
for some 
positive constant $K$ depending only on $f$.
\end{lemm}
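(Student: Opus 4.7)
My plan is to Taylor-expand $\hat{f}$ to second order around $\hat{z}_1$ on the break-point-free interval $[\hat{z}_1, \hat{z}_4]$, substitute the resulting expressions into each of the four divided differences forming the cross-ratio, and verify that the leading polynomial parts of numerator and denominator cancel exactly, leaving a remainder of order $L^{1+\alpha}$, where $L := \hat{z}_4 - \hat{z}_1$.

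For the setup I put $t_i := \hat{z}_i - \hat{z}_1$ (so $0 = t_1 < t_2 < t_3 < t_4 = L$), $A := D\hat{f}(\hat{z}_1)$, and $B := D^2 \hat{f}(\hat{z}_1)$. Since $[\hat{z}_1, \hat{z}_4]$ contains no break points, $\hat{f} \in C^{2+\alpha}$ there and $D^2 \hat{f}$ is $\alpha$-H\"older with some constant $H$ depending only on $f$. The remainder $R(t) := \hat{f}(\hat{z}_1 + t) - \hat{f}(\hat{z}_1) - A t - \frac{1}{2} B t^2$ satisfies $R(0) = R'(0) = 0$ and $|R''(t)| \leq H L^\alpha$ on $[0, L]$, whence $|R'(t)| \leq H L^{1+\alpha}$ there. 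Applying the mean value theorem to $R$ on $[t_j, t_i]$ for $i > j$ gives
\[
\frac{\hat{f}(\hat{z}_i) - \hat{f}(\hat{z}_j)}{\hat{z}_i - \hat{z}_j} = A + \frac{B}{2}(t_i + t_j) + E_{ij},\qquad |E_{ij}| \leq H L^{1+\alpha};
\]
crucially this bound on $E_{ij}$ is uniform in the positions of the points and does not deteriorate when $t_i - t_j$ is small compared to $L$.

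Substituting these four divided differences into $Dist(z_1, z_2, z_3, z_4; f)$ and expanding the products, the $A^2$ terms in numerator and denominator agree, as do the symmetric linear contributions $\frac{AB}{2}(t_1 + t_2 + t_3 + t_4)$. The only surviving second-order discrepancy is
\[
\frac{B^2}{4}\bigl[(t_1+t_2)(t_3+t_4) - (t_1+t_3)(t_2+t_4)\bigr] = -\frac{B^2}{4}(t_3 - t_2)(t_4 - t_1),
\]
which is $O(L^2)$, while contributions involving at least one $E_{ij}$ total $O(A L^{1+\alpha}) + O(L^{2+\alpha})$. The denominator equals $A^2 + O(L) \geq \frac{1}{2} c_1^2$ for $L$ below some threshold (using the uniform lower bound $A \geq c_1 > 0$ from the definition of $P$-homeomorphism), so dividing yields $|Dist - 1| = O(L^2) + O(L^{1+\alpha}) \leq K L^{1+\alpha}$; the $L^2$ term is absorbed using $\alpha \leq 1$ and $L \leq 1$. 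For $L$ larger than this threshold, $|Dist - 1|$ is uniformly bounded via the Denjoy estimate of Lemma 2.2, so the inequality holds trivially.

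The main obstacle is the algebraic bookkeeping: one must keep three families of error terms (the $O(L^2)$ Schwarzian-like leftover, the linear-in-error contributions $A \cdot E_{ij}$, and the mixed products $B t \cdot E_{ij}$) straight, and verify that the leading $O(1)$ and $O(L)$ contributions of numerator and denominator cancel exactly. Expanding only to first order, i.e.\ replacing $\hat{f}(\hat{z}_i) - \hat{f}(\hat{z}_j)$ by $A(t_i - t_j)$, would leave an $O(L)$ error rather than the sharp $O(L^{1+\alpha})$ bound required, so the second-order expansion (and the exact cancellation it provides) is indispensable.
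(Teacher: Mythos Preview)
The paper does not actually prove this lemma; it is stated with a citation to \cite{DK1998} and used as a black box. So there is no in-paper proof to compare against.

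Your argument is correct in substance. The key observation---that writing each divided difference as $A + \tfrac{B}{2}(t_i+t_j) + E_{ij}$ with $|E_{ij}| \le H L^{1+\alpha}$ uniformly (independent of how close $t_i, t_j$ are) makes the $O(1)$ and $O(L)$ parts of numerator and denominator cancel exactly---is exactly the right mechanism, and your algebraic identity $(t_1+t_2)(t_3+t_4)-(t_1+t_3)(t_2+t_4) = -(t_4-t_1)(t_3-t_2)$ handles the surviving $B^2$ term. Two small remarks: (i) the appeal to ``Lemma~2.2'' for the large-$L$ regime is misdirected, since that lemma concerns $q_n$-close points; what you actually need is just that each divided difference lies in $[c_1,c_2]$ by the mean value theorem, so $Dist$ is trapped in $[(c_1/c_2)^2,(c_2/c_1)^2]$ and $|Dist-1|$ is uniformly bounded; (ii) absorbing the $O(L^2)$ term into $O(L^{1+\alpha})$ tacitly uses $\alpha\le 1$, which is the standard convention here but is worth stating explicitly. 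Neither point affects the validity of the argument.
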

Next we consider the case when the interval $[z_{1},z_{4}]$
contains one break point $b$ of the homeomorphism $f$. We
estimate the distortion of its cross-ratio when $b$ lies outside
the middle interval $[z_{2},z_{3}]$. For this we define for $z_{i}\in{S^{1}},$ $i=1,2,3,4,$ with
$z_{1}\prec{z_{2}}\prec{z_{3}}\prec{z_{4}}\prec{z_{1}}$ and
$b\in{[z_{1},z_{2}]\cup[z_{3},z_{4}]}$ the following quantities:
$$\alpha:=\hat{z}_{2}-\hat{z}_{1}, \,\beta:=\hat{z}_{3}-\hat{z}_{2},\,
\gamma:=\hat{z}_{4}-\hat{z}_{3},\,\tau:=\hat{z}_{2}-\hat{b},\,\xi:=\frac{\beta}{\alpha}, \,\zeta:=
\frac{\tau}{\alpha},\,\eta:=\frac{\beta}{\gamma},\,\vartheta:=\frac{\hat{b}-\hat{z}_{3}}{\gamma}.$$

\begin{lemm}\label{lemm3.4} Assume $f$  is $P$-homeomorphism with  a finite number of break points and $f\in C^{2}(S^{1}\setminus BP(f))$.
Choose points $z_{i}\in{S^{1}}$, $i=1,2,3,4$,
with
$z_{1}\prec{z_{2}}\prec{z_{3}}\prec{z_{4}}\prec{z_{1}}$ such that $f$ has one
single break point $b$ in   $[z_{1},z_{2}]\cup[z_{3},z_{4}] $.
 Then
\begin{itemize}
 \item[i)] $|Dist(z_{1},z_{2},z_{3},z_{4};f)-\frac{[\sigma_{f}(b)+(1-\sigma_{f}(b))\zeta](1+\xi)}
     {\sigma_{f}(b)+(1-\sigma_{f}(b))\zeta+\xi}|\leq
K_{1}|\hat{z}_{4}-\hat{z}_{1}|,
     $ if \,\, $b \in {[z_{1},z_{2}]}$,
 \item[ii)]
$|Dist(z_{1},z_{2},z_{3},z_{4};f)-\frac{[\sigma_{f}(b)+(1-\sigma_{f}(b))\vartheta](1+\eta)}
{\sigma_{f}(b)+(1-\sigma_{f}(b))\vartheta+\eta}|\leq
K_{1}|\hat{z}_{4}-\hat{z}_{1}|,
     $ if \,\, $b\in{[z_{3},z_{4}]}$

\end{itemize}
for some positive constant $K_{1}$ depending only on $f$.
\end{lemm}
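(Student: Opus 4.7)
The plan is to reduce the computation to the smooth cross-ratio estimate of Lemma \ref{lemm3.1} by splitting the intervals that contain the break point $b$ at $b$ itself, and applying a one-sided Taylor expansion of $\hat{f}$ on each smooth piece. I treat case (i), where $b\in[z_{1},z_{2}]$; case (ii) is handled by exactly the same scheme with the roles of left and right interchanged.

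Set $D_{+} := D\hat{f}_{+}(\hat{b})$ and $D_{-} := D\hat{f}_{-}(\hat{b}) = \sigma_{f}(b)\,D_{+}$. Among the four differences that enter the cross-ratio of the images, only $\hat{f}(\hat{z}_{2})-\hat{f}(\hat{z}_{1})$ and $\hat{f}(\hat{z}_{3})-\hat{f}(\hat{z}_{1})$ straddle $b$; the other two intervals lie in the smooth region to the right of $\hat{b}$. For the first kind I split at $\hat{b}$ and Taylor-expand from each side, obtaining
\[ \hat{f}(\hat{z}_{2})-\hat{f}(\hat{z}_{1}) = D_{-}(\hat{b}-\hat{z}_{1}) + D_{+}(\hat{z}_{2}-\hat{b}) + O(\alpha^{2}) = D_{+}\alpha\bigl[\sigma_{f}(b)+(1-\sigma_{f}(b))\zeta\bigr] + O(\alpha^{2}), \]
and similarly $\hat{f}(\hat{z}_{3})-\hat{f}(\hat{z}_{1}) = D_{+}\alpha\bigl[\sigma_{f}(b)+(1-\sigma_{f}(b))\zeta+\xi\bigr] + O(|\hat{z}_{4}-\hat{z}_{1}|^{2})$. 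For the two smooth intervals I use the mean value theorem together with $D\hat{f}(y) = D_{+} + O(|y-\hat{b}|)$, valid to the right of $\hat{b}$ by the $C^{2}$ assumption, yielding
\[ \hat{f}(\hat{z}_{4})-\hat{f}(\hat{z}_{3}) = D_{+}\gamma\bigl(1 + O(|\hat{z}_{4}-\hat{z}_{1}|)\bigr), \qquad \hat{f}(\hat{z}_{4})-\hat{f}(\hat{z}_{2}) = D_{+}(\beta+\gamma)\bigl(1 + O(|\hat{z}_{4}-\hat{z}_{1}|)\bigr). \]

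I then assemble these four estimates into the ratio $Cr(\hat{f}(\hat{z}_{1}),\ldots,\hat{f}(\hat{z}_{4}))/Cr(\hat{z}_{1},\ldots,\hat{z}_{4})$. All four factors of $D_{+}$ cancel, and using $(\alpha+\beta)/\alpha = 1+\xi$ the algebra collapses to
\[ Dist(z_{1},z_{2},z_{3},z_{4};f) = \frac{[\sigma_{f}(b)+(1-\sigma_{f}(b))\zeta](1+\xi)}{\sigma_{f}(b)+(1-\sigma_{f}(b))\zeta+\xi}\cdot\bigl(1 + O(|\hat{z}_{4}-\hat{z}_{1}|)\bigr). \]

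The main obstacle, though mild, is to convert this multiplicative error into the claimed additive bound $K_{1}|\hat{z}_{4}-\hat{z}_{1}|$ with a constant uniform in $(\xi,\zeta)$. For this I observe that, since $\zeta\in[0,1]$, the quantity $\sigma_{f}(b)+(1-\sigma_{f}(b))\zeta$ is a convex combination of $1$ and $\sigma_{f}(b)$ and is therefore bounded below by $\min(1,\sigma_{f}(b))>0$; the target fraction itself is sandwiched between $1$ and this convex combination and is thus bounded above. These uniform bounds, together with the $C^{2}$ bound on $\hat{f}$ and the bounds $c_{1}\le D\hat{f}_{\pm}\le c_{2}$ from the definition of a $P$-homeomorphism, absorb every $(1+O(|\hat{z}_{4}-\hat{z}_{1}|))$ factor into a single error of size at most $K_{1}|\hat{z}_{4}-\hat{z}_{1}|$, with $K_{1}$ depending only on $f$. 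Case (ii) is handled identically, with $\hat{b}$ lying in $[\hat{z}_{3},\hat{z}_{4}]$, the break-straddling factors now being $\hat{f}(\hat{z}_{4})-\hat{f}(\hat{z}_{3})$ and $\hat{f}(\hat{z}_{4})-\hat{f}(\hat{z}_{2})$, and the convex combination becoming $\sigma_{f}(b)+(1-\sigma_{f}(b))\vartheta$ with parameter $\eta$ in place of $\xi$.
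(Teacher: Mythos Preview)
Your proof is correct and follows essentially the same approach as the paper: both arguments Taylor-expand $\hat{f}$ from each side of the break point to evaluate the two cross-ratio factors that straddle $b$, handle the remaining factors using the $C^{2}$ smoothness on $[\hat{z}_{2},\hat{z}_{4}]$, and then invoke the uniform bounds $\min(1,\sigma_{f}(b))\le \sigma_{f}(b)+(1-\sigma_{f}(b))\zeta\le\max(1,\sigma_{f}(b))$ to convert the resulting multiplicative error to an additive one with a constant depending only on $f$. The only difference is organizational: the paper groups the four image differences into the two ratios $\frac{\hat{f}(\hat{z}_{2})-\hat{f}(\hat{z}_{1})}{\hat{f}(\hat{z}_{3})-\hat{f}(\hat{z}_{1})}$ and $\frac{\hat{f}(\hat{z}_{4})-\hat{f}(\hat{z}_{3})}{\hat{f}(\hat{z}_{4})-\hat{f}(\hat{z}_{2})}$ before estimating, whereas you estimate all four differences separately and assemble them at the end. (One minor remark: your opening sentence says you ``reduce to Lemma~\ref{lemm3.1}'', but in fact you never invoke that lemma---you redo the smooth estimate directly via the mean value theorem, which is fine.)
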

\begin{proof}
We prove only the first assertion of  Lemma \ref{lemm3.4}. The second one can be proved similarly. Obviously
$$
\hat{f}(\hat{z}_{2})-\hat{f}(\hat{z}_{1})=D\hat{f}_{+}(\hat{b})
(\hat{z}_{2}-\hat{b})+\frac{D^{2}\hat{f}(\theta_{1})(\hat{z}_{2}-\hat{b})^{2}}{2}+
D\hat{f}_{-}(\hat{b})
(\hat{b}-\hat{z}_{1})+\frac{D^{2}\hat{f}(\theta_{2})(\hat{b}-\hat{z}_{1})^{2}}{2}
$$
and
$$
\hat{f}(\hat{z}_{3})-\hat{f}(\hat{z}_{1})=D\hat{f}_{+}(\hat{b})
(\hat{z}_{3}-\hat{b})+\frac{D^{2}\hat{f}(\theta_{3})(\hat{z}_{3}-\hat{b})^{2}}{2}+
D\hat{f}_{-}(\hat{b})
(\hat{b}-\hat{z}_{1})+\frac{D^{2}\hat{f}(\theta_{4})(\hat{b}-\hat{z}_{1})^{2}}{2},
$$
for some  $\theta_{1}\in(\hat{b}, \hat{z}_{2}),$ \, $\theta_{2}\in(\hat{z}_{1}, \hat{b}),$ \, $\theta_{3}\in(\hat{b}, \hat{z}_{3}),$ \,$\theta_{4}\in(\hat{z}_{1}, \hat{b}).$

Using the last two relations it is easy to show
\begin{eqnarray}\label{eq411}
\frac{\hat{f}(\hat{z}_{2})-\hat{f}(\hat{z}_{1})}
{\hat{f}(\hat{z}_{3})-\hat{f}(\hat{z}_{1})}=
\frac{\sigma_{f}(b)+(1-\sigma_{f}(b))\zeta+O(\alpha)}{G(\zeta, \xi)+O(\alpha+\beta)}\times \frac{\alpha}{\alpha+\beta}
\end{eqnarray}
where $G(\zeta, \xi)=(\sigma_{f}(b)+(1-\sigma_{f}(b))\zeta +\xi)/(1+\xi)$ and $\xi>0.$
It is clear that
$\min\{1, \sigma_{f}(b)\}\leq \sigma_{f}(b)+(1-\sigma_{f}(b))\zeta \leq \max\{1, \sigma_{f}(b)\}$ and
$\min\{1, \sigma_{f}(b)\}\leq G(\zeta, \xi) \leq 1+\max\{1, \sigma_{f}(b)\}.$
The last two inequalities together with (\ref{eq411}) imply that
\begin{eqnarray}\label{eq412}
\frac{\hat{f}(\hat{z}_{2})-\hat{f}(\hat{z}_{1})}
{\hat{f}(\hat{z}_{3})-\hat{f}(\hat{z}_{1})}:\frac{\alpha}{\alpha+\beta}=
\frac{[\sigma_{f}(b)+(1-\sigma_{f}(b))\zeta](1+\xi)}{\sigma_{f}(b)+(1-\sigma_{f}(b))\zeta
+\xi}+O(\alpha+\beta).
\end{eqnarray}
Since $\hat{f}\in C^{2}([\hat{z}_{2}, \hat{z}_{4}]),$ we get
\begin{eqnarray}\label{eq413}
\frac{\hat{f}(\hat{z}_{4})-\hat{f}(\hat{z}_{3})}
{\hat{f}(\hat{z}_{4})-\hat{f}(\hat{z}_{2})}:\frac{\gamma}{\gamma+\beta}=
1+O(\gamma+\beta).
\end{eqnarray}
The relations (\ref{eq412}) and (\ref{eq413}) imply the first assertion of Lemma \ref{lemm3.4}. The second one can be proved similarly.
\end{proof}

\section {Proof of Theorem \ref{ADM1}.}

For the proof of Theorem \ref{ADM1} we need several Lemmas which we
formulate next. Their proofs will be given later. Consider two
copies of the circle $S^{1}$ and homeomorphisms $f_{i}$ each with two
break points $a_{i},  b_{i}, \, i=1,2$, and the same irrational rotation number $\rho$.
 Assume that $f_{1}$ and $f_{2}$ satisfy the conditions of
Theorem \ref{ADM1}.
\par Let
$\varphi_{1}$ and $\varphi_{2}$ be maps conjugating $f_{1}$ and
$f_{2}$ with the pure rotation $f_{\rho}$, i.e. $\varphi_{1}\circ
f_{1}=f_{\rho}\circ\varphi_{1}$ and $\varphi_{2}\circ
f_{2}=f_{\rho}\circ \varphi_{2}$. It is easy to check that the map
$\psi=\varphi^{-1}_{2}\circ \varphi_{1}$ conjugates $f_{1}$ and
$f_{2}$ , i.e.
\begin{eqnarray}\label{eq41}
\psi(f_{1}(x))=f_{2}(\psi(x))
\end{eqnarray}
for all $x\in S^{1}$. By assumption in  Theorem \ref{ADM1}
 $\sigma_{f_{1}}(a_{1})\cdot\sigma_{f_{1}}(b_{1})\neq\sigma_{f_{2}}(a_{2})\cdot \sigma_{f_{2}}(b_{2}).$
 W.l.o.g assume
 $\sigma_{f_{1}}(a_{1})\neq\sigma_{f_{2}}(a_{2}).$
Since $\varphi_{i}, \ i=1,2,$ is unique up
to an additive constant we can choose $\varphi_{i}, \
i=1,2,$ such that $\varphi_{1}(a_{1})=a_{1}$ and  $\varphi_{2}^{-1}(a_{1})=a_{2}$ and hence $\psi(a_{1})=a_{2}$.
Then by assumption of Theorem \ref{ADM1}  $\psi(b_{1})=b_{2}.$
 Recall, that the
length of an interval $[a,b]\subset{S^{1}}$ is defined by
\[l([a,b]):=\mu_L([a,b])=\left\{\begin{array}{ll}\hat{b}-\hat{a}, & \mbox{if \,\,
$0\leq\hat{a}<\hat{b}<1$},\\1+\hat{b}-\hat{a}, & \mbox{if \,\,
$0\leq{\hat{b}}<\hat{a}<1$}.\end{array}\right.\]

\begin{defi}\label{condition C} {\rm Let  $R_1>1$ and $\varepsilon>0$ be constants.
The points $x_{0}, z_{i}\in {S^{1}}$
 with
$z_1\prec{z}_2\prec{z}_3\prec{z}_4\prec{z}_1$
  satisfy\textbf{ conditions $(C_{R_1,\varepsilon})$} if:
\begin{itemize} \item[(a)]
$R^{-1}_{1}l([z_{2},z_{3}])\sqrt{\varepsilon}\leq l([z_{1},z_{2}])\leq{R_{1}l([z_{2},z_{3}])\sqrt[4]{\varepsilon}}$;
 \item[(b)]
$R^{-1}_{1}l([z_{2},z_{3}])\leq l([z_{3},z_{4}])\leq R_{1}l([z_{2},z_{3}])$;
\item[(c)] \
 $\underset{1\leq i\leq 4}{\max}l([x_{0},z_{i}])\leq
R_{1}l([z_{2},z_{3}])$.
\end{itemize}}
\end{defi}

 For $x_{0}\in {S^{1}}$ with lift $\hat{x}_{0}$ in $[0,1)$ define
$d(\hat{x}_0):=\min\{\hat{x}_0,(1-\hat{x}_0)\}.$

\begin{lemm}\label{lemm4.2} Assume, that the lift $\hat{\psi}$  of the conjugating map $\psi$
 has a positive derivative $D\hat{\psi}(\hat{x}_0)=\omega$ at the point $\hat{x}_0\in [0,1)$
 and let $R_1>1$ be a constant. Then there exists a constant $C_{2}=C_{2}(\omega,R_{1})$
such that for any $\varepsilon>0$ there exists $\delta=\delta(\hat{x}_0,\varepsilon)\in (0,d(\hat{x}_0))$
such that for all $z_{i}\in S^1$ with $\hat{z}_i \in (\hat{x}_{0}-\delta,\hat{x}_{0}+\delta),$ $i=1,2,3,4$, satisfying
the conditions $(C_{R_1,\varepsilon})$ one has:
\begin{itemize} \item[i)]
$\frac{l([z_{1},z_{2}])}{l([z_{2},z_{3}])}(1-C_{2}\sqrt{\varepsilon})
\leq{\frac{l[\psi(z_{1}), \psi(z_{2})]}{l[\psi(z_{2}),\psi(z_{3})]} \leq
\frac{l([z_{1},z_{2}])}{l([z_{2},z_{3}])}(1+C_{2}\sqrt\varepsilon)}$,
\item[ii)]$\frac{l([z_{3},z_{4}])}{l([z_{2},z_{3}])}(1-C_{2}\varepsilon)\leq{\frac{l[\psi(z_{3}),
\psi(z_{4})]}{l[\psi(z_{2}), \psi(z_{3})]}\leq \frac{l([z_{3},z_{4}])}{l([z_{2},z_{3}])}
(1+C_{2}\varepsilon)}.$
\end{itemize}
\end{lemm}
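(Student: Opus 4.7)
The approach is to linearize $\hat{\psi}$ near $\hat{x}_0$ using the hypothesis $D\hat{\psi}(\hat{x}_0)=\omega>0$, and then to translate this first-order information into ratio estimates by exploiting the precise length scales dictated by conditions $(C_{R_1,\varepsilon})$. Concretely, differentiability provides, for any prescribed $\eta>0$, some $\delta\in(0,d(\hat{x}_0))$ such that $|\hat{\psi}(\hat{x})-\hat{\psi}(\hat{x}_0)-\omega(\hat{x}-\hat{x}_0)|\le \eta|\hat{x}-\hat{x}_0|$ whenever $|\hat{x}-\hat{x}_0|<\delta$. Applying this at two such lifts and subtracting gives
\[
\bigl|(\hat{\psi}(\hat{z}')-\hat{\psi}(\hat{z}))-\omega(\hat{z}'-\hat{z})\bigr|\le \eta\bigl(|\hat{z}-\hat{x}_0|+|\hat{z}'-\hat{x}_0|\bigr).
\]
Since $\delta<d(\hat{x}_0)$, no lift wraps around $[0,1)$, so $l([\psi(z),\psi(z')])=\hat{\psi}(\hat{z}')-\hat{\psi}(\hat{z})$ for the ordered pairs of interest.

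Set $L:=l([z_2,z_3])$. Condition (c) bounds every $|\hat{z}_i-\hat{x}_0|\le R_1 L$, so for every pair appearing in the Lemma we may write $l([\psi(z_i),\psi(z_j)])=\omega\, l([z_i,z_j])+e_{ij}$ with $|e_{ij}|\le 2\eta R_1 L$. Take $\eta:=\omega\varepsilon/(8R_1^2)$, which by step one determines $\delta=\delta(\hat{x}_0,\varepsilon)$. For part (ii), condition (b) gives $l([z_3,z_4])\ge R_1^{-1}L$, so both
\[
\Bigl|\frac{e_{34}}{\omega\, l([z_3,z_4])}\Bigr|\le \frac{2\eta R_1^2}{\omega}=\frac{\varepsilon}{4},\qquad \Bigl|\frac{e_{23}}{\omega L}\Bigr|\le \frac{\varepsilon}{4},
\]
and writing
\[
\frac{l([\psi(z_3),\psi(z_4)])}{l([\psi(z_2),\psi(z_3)])}=\frac{l([z_3,z_4])}{L}\cdot\frac{1+e_{34}/(\omega\, l([z_3,z_4]))}{1+e_{23}/(\omega L)}
\]
the second factor lies in $[1-C_2\varepsilon,1+C_2\varepsilon]$ for an appropriate $C_2=C_2(\omega,R_1)$, which proves (ii).

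For part (i) the same decomposition applies, but the numerator denominator $l([z_1,z_2])$ is only bounded below by $R_1^{-1}L\sqrt{\varepsilon}$ (condition (a)). Consequently
\[
\Bigl|\frac{e_{12}}{\omega\, l([z_1,z_2])}\Bigr|\le \frac{2\eta R_1^2}{\omega\sqrt{\varepsilon}}=\frac{\sqrt{\varepsilon}}{4},
\]
which, combined with the $\varepsilon/4$ bound on the denominator perturbation, yields exactly the $1\pm C_2\sqrt{\varepsilon}$ deviation claimed in (i). The upper bound $l([z_1,z_2])\le R_1 L\sqrt[4]{\varepsilon}$ in (a) is not needed here and is presumably reserved for a later cross-ratio argument in the proof of Theorem \ref{ADM1}.

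The genuine subtlety, and the only step that is not bookkeeping, lies in part (i): the interval $[z_1,z_2]$ is $\sqrt{\varepsilon}$ times shorter than its neighbours, so absolute errors of size $\eta L$ inflate into relative errors of size $\eta/\sqrt{\varepsilon}$. The saving grace is that we are free to choose $\eta$ proportional to $\varepsilon$ rather than $\sqrt{\varepsilon}$, because $\delta$ is allowed to depend on $\varepsilon$; this exact trade-off between the lower bound in (a) and the rate at which $\eta$ tends to zero produces the asymmetry $\sqrt{\varepsilon}$ versus $\varepsilon$ in the two conclusions. Everything else reduces to the elementary estimate $(1+x)/(1+y)\in[1-c|x-y|,1+c|x-y|]$ for small $x,y$.
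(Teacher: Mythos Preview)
Your proof is correct and follows essentially the same approach as the paper: linearize $\hat\psi$ at $\hat x_0$, bound the additive error on each subinterval by a multiple of the derivative tolerance times $L=l([z_2,z_3])$ using condition (c), and then observe that dividing by $l([z_1,z_2])\ge R_1^{-1}L\sqrt{\varepsilon}$ produces the $\sqrt{\varepsilon}$ in (i) while dividing by $l([z_2,z_3])$ or $l([z_3,z_4])$ produces the $\varepsilon$ in (ii). The only cosmetic difference is that the paper takes the derivative tolerance equal to the same $\varepsilon$ appearing in $(C_{R_1,\varepsilon})$ and lets the factor $1/\sqrt{\varepsilon}$ surface through the ratio bound $\frac{|\hat x_0-\hat z_i|}{\hat z_2-\hat z_1}\le R_1^2/\sqrt{\varepsilon}$, whereas you introduce a separate parameter $\eta\propto\varepsilon$; the arithmetic is identical, and your observation that the upper bound in condition (a) is not used here is also consistent with the paper's argument.
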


\begin{lemm}\label{lemm4.3}
Suppose the lift $\hat{\psi}$ has a positive derivative $D\hat{\psi}(\hat{x}_{0})=\omega$ at
 the point $\hat{x}_{0}\in{[0,1)}$ and let $R_1>1$ be a constant. Then
there exists a constant $R_{2}=R_{2}(\omega,R_{1})$
such that for any $\varepsilon>0$
there exists $\delta=\delta(\hat{x}_0,\varepsilon)\in (0,d(\hat{x}_0))$
such that for all $z_{i}\in S^1$ with $\hat{z}_i\in (\hat{x}_{0}-\delta,\ \hat{x}_{0}+\delta),$ $i=1,2,3,4,$ satisfying
the conditions $(C_{R_1,\varepsilon})$ one has:
\begin{eqnarray}
|Dist(z_1,z_2,z_3,z_4;\psi))-1|\leq {R_2\sqrt{\varepsilon}.}
\end{eqnarray}

\end{lemm}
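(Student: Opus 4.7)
The plan is to write the cross-ratio distortion explicitly in terms of the three consecutive lengths $l_{12}:=l([z_1,z_2])$, $l_{23}:=l([z_2,z_3])$, $l_{34}:=l([z_3,z_4])$, and their images under $\psi$, and then bootstrap Lemma \ref{lemm4.2} to bound the resulting expression. From the definition of the cross-ratio one checks the algebraic identity
\begin{equation*}
Cr(z_1,z_2,z_3,z_4)=\frac{l_{12}\,l_{34}}{(l_{12}+l_{23})(l_{23}+l_{34})},
\end{equation*}
with the analogous formula for $Cr(\psi(z_1),\psi(z_2),\psi(z_3),\psi(z_4))$ using $l'_{ij}:=l([\psi(z_i),\psi(z_j)])$. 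Dividing through by $l_{23}$ (resp.\ $l'_{23}$) in numerator and denominator I would introduce the dimensionless ratios $A:=l_{12}/l_{23}$, $B:=l_{34}/l_{23}$, $A':=l'_{12}/l'_{23}$, $B':=l'_{34}/l'_{23}$, which leads to
\begin{equation*}
Dist(z_1,z_2,z_3,z_4;\psi)=\frac{A'B'}{AB}\cdot\frac{(A+1)(1+B)}{(A'+1)(1+B')}.
\end{equation*}

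Lemma \ref{lemm4.2} then gives $A'=A(1+\theta_1)$ and $B'=B(1+\theta_2)$ with $|\theta_1|\le C_2\sqrt{\varepsilon}$ and $|\theta_2|\le C_2\varepsilon$, provided the $z_i$ lie in the prescribed $\delta$-neighborhood of $x_0$. Substituting into the displayed identity the factors $A'B'/(AB)$ reduce to $(1+\theta_1)(1+\theta_2)$, while the denominator rearranges as
\begin{equation*}
\frac{(A'+1)(1+B')}{(A+1)(1+B)}=\Bigl(1+\frac{A\theta_1}{A+1}\Bigr)\Bigl(1+\frac{B\theta_2}{1+B}\Bigr).
\end{equation*}
Here condition $(C_{R_1,\varepsilon})$(a) forces $A\le R_1\varepsilon^{1/4}$, so the term $A/(A+1)\le R_1\varepsilon^{1/4}$ is small; condition (b) only tells us $B/(1+B)\le 1$, but $|\theta_2|=O(\varepsilon)$ already. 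Hence the second factor contributes $1+O(\varepsilon^{3/4})$ while the first is $(1+\theta_1)(1+\theta_2)=1+O(\sqrt{\varepsilon})$. Combining everything yields an overall error of order $\sqrt{\varepsilon}$, which fixes the constant $R_2$ in terms of $\omega$ and $R_1$ only.

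The main subtlety I expect is the careful bookkeeping of error terms: the $A$-error in Lemma \ref{lemm4.2}(i) is only $O(\sqrt{\varepsilon})$, which naively would overwhelm the required bound. The key observation that saves the day is that in the displayed denominator this error appears multiplied by $A/(A+1)$, and condition (a) of $(C_{R_1,\varepsilon})$ was designed precisely so that $A$ itself is $O(\varepsilon^{1/4})$, suppressing this contribution down to $O(\varepsilon^{3/4})$. Thus the dominant error comes only from the numerator factor $(1+\theta_1)$, giving $\sqrt{\varepsilon}$. Choosing $\delta$ small enough so that both conclusions of Lemma \ref{lemm4.2} are simultaneously valid (one simply takes the minimum of the two $\delta$'s from Lemma \ref{lemm4.2}) completes the argument.
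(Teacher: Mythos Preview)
Your argument is correct. The algebra is sound: writing $Dist=\dfrac{(1+\theta_1)(1+\theta_2)}{\bigl(1+\frac{A\theta_1}{A+1}\bigr)\bigl(1+\frac{B\theta_2}{1+B}\bigr)}$ and using $A\le R_1\varepsilon^{1/4}$ from condition~(a) to suppress the $A\theta_1/(A+1)$ term to $O(\varepsilon^{3/4})$ does yield the $O(\sqrt{\varepsilon})$ bound. One small remark: Lemma~\ref{lemm4.2} already produces a single $\delta$ valid for both conclusions (i) and (ii), so there is no need to take a minimum of two $\delta$'s.

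Your route differs from the paper's in a mild but noteworthy way. The paper does not invoke the \emph{statement} of Lemma~\ref{lemm4.2} at all; instead it goes back to the intermediate inequalities (\ref{eqn32})--(\ref{eqn34}) established in its proof, which say that each of the four difference quotients
\[
\frac{\hat\psi(\hat z_2)-\hat\psi(\hat z_1)}{\hat z_2-\hat z_1},\quad
\frac{\hat\psi(\hat z_4)-\hat\psi(\hat z_3)}{\hat z_4-\hat z_3},\quad
\frac{\hat\psi(\hat z_3)-\hat\psi(\hat z_1)}{\hat z_3-\hat z_1},\quad
\frac{\hat\psi(\hat z_4)-\hat\psi(\hat z_2)}{\hat z_4-\hat z_2}
\]
equals $\omega+O(\sqrt\varepsilon)$ (the first) or $\omega+O(\varepsilon)$ (the other three). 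Since the distortion factors exactly as the product of these four quantities (two in the numerator, two inverted), the $\omega$'s cancel and the bound follows immediately. This is shorter because the non-adjacent pairs $[z_1,z_3]$ and $[z_2,z_4]$ are handled directly by (\ref{eqn34}), avoiding the extra algebraic step where you had to exploit the smallness of $A$. Conversely, your version has the virtue of using only the black-box statement of Lemma~\ref{lemm4.2}, at the cost of that additional observation.
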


The main idea for proving that the  map  $\psi$ conjugating $f_{1}$ and $f_{2}$ is a singular
function is
to construct a quadruple of points $z_i,\, i=1,2,3,4$, for which the ratio of the distortions
 $Dist (z_{1},z_{2},z_{3},z_{4};f^{q_{n}}_{1})$ and
$Dist (\psi({z}_{1}),\psi({z}_{2}),\psi({z}_{3}),\psi({z}_{4});f^{q_{n}}_{2})$ stays away from 1.

For this assume  $D\hat{\psi}(\hat{x}_{0})=\omega>0$ for the lift $\hat{x}_{0}\in
[0,1)$ of a point $x_{0}\in S^{1}.$ W.l.o.g. we can choose $n$ to be odd.
Then we have
$\Delta_{0}^{(n)}(z)=[f_{1}^{q_{n}}(z),z]$ and
$\Delta_{0}^{(n-1)}(z)=[z,f^{q_{n-1}}_{1}(z)]$ for any point $z$ of the circle.
 Consider the $n$-th dynamical partition $\xi_{n}(x_0)$ of the  point $x_0\in S^1$
defined by the homeomorphism $f_{1}$. Only one interval of the partition  $\xi_{n}(x_0)$
covers the break point $a_{1}$. Hence there exists an unique point $\overline {a}_{1}$ with
either $\overline {a}_{1}\in \Delta_{0}^{(n-1)}(x_{0})$ and  $f^{l}_{1}(\overline {a}_{1})=a_{1}$
for some $0 \leq l < q_ {n}$,  or $\overline {a}_{1}\in \Delta_{0}^{(n)}(x_{0})$ and $f^{l}_{1}(\overline {a}_{1})=a_{1}$
 for some $0 \leq l < q_ {n-1}$. We call the point $\overline {a}_{1}$ the \textbf{$q_{n}$-preimage}
of the break point ${a}_{1}$ in $ \Delta_{0}^{(n-1)}(x_{0}) \cup \Delta_{0}^{(n)}(x_{0}).$
There exists an unique point $t_{0}$ such that $\overline{a}_{1}$ is the \textbf{middle point} of the interval
$[ t_{0},f^{q_{n-1}}_{1}(t_{0})]$ (see Figure 1).
\begin{figure}
\centering
\includegraphics[width=14cm]{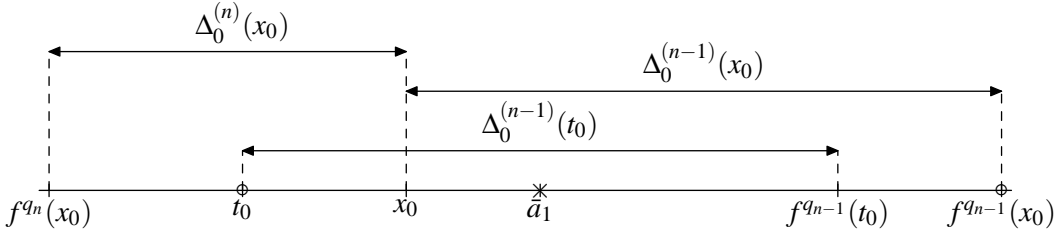}
\caption{The point $\overline{a}_{1}=f_{1}^{-l} (a_{1})$
belongs to the interval $[ f_{1}^{q_{n}}(x_{0}),f_{1}^{q_{n-1}}(x_{0})]$
and is the middle point of $[ t_{0},f_{1}^{q_{n-1}} (t_{0})].$}
\end{figure}
Consider now the $n$-th dynamical partitions $\xi_{n}(t_{0})$ of the  point $t_{0}$
defined by $f_{1}$ on the first circle respectively $\zeta_{n}(\psi(t_{0}))$ of the point $\psi(t_{0})$
 defined by $f_{2}$ on the second circle. For each $n\geq 1$ define
 $$
 \Delta^{(n)}_{i}(t_{0}):=f^{i}_{1}(\Delta^{(n)}_{0}(t_{0})),
\,\,\ C^{(n)}_{i}(\psi(t_{0})):=f^{i}_{2}(C^{(n)}_{0}(\psi(t_{0})),\,\,0\leq i < q_{n+1},$$
 where
$\Delta^{(n)}_{0}(t_{0})$ respectively $C^{(n)}_{0}(\psi(t_{0}))$ are the initial intervals of
order $n$ of the points $t_{0}$ respectively $\psi(t_{0})$ determined by $f_1$ respectively
$f_2$ . By definition

$$\xi_{n}(t_{0})=\{\Delta^{(n-1)}_{i}(t_{0}), \,\, 0\leq i< q_n \} \cup \{
\Delta^{(n)}_{j}(t_{0}), \,\, 0\leq{j}< q_{n-1}\},$$
$$\,\,\,\,\,\,\,\,\,\,\,\,\,\,\,\,\,\,\,\,\zeta_{n}(\psi(t_{0}))=\{{C^{(n-1)}_{i}(\psi(t_{0})) ,\,\, 0\leq i < q_n
\}\cup \{C^{(n)}_{j}(\psi(t_{0})), 0\leq{j}<q_{n-1}}\}.$$

 Since the common rotation number $\rho$ of $f_{1}$ and $f_{2}$ is
irrational,  the order of the points on the orbit \,\,
$\{f^{k}_{1}(x_{0}), \, k\in\mathbb{Z} ^1\} $ on the first circle will be precisely
the same as the one  for the orbit \,\, $\{f^k_{2}(\psi(x_0)),\,\, k\in \mathbb{Z}^1\}$
on the second circle. This together with the relation
 $\psi(f_1 (x))=f_2(\psi (x))$ for $x\in S^{1}$ implies that
$$\psi\Big(\Delta_{i}^{(n-1)}(t_{0})\Big)=C_{i}^{(n-1)}(\psi(t_{0})),\,\, 0\leq i<
q_{n},\,\,\ \psi\Big(\Delta_{j}^{(n)}(t_{0})\Big)=C_{j}^{(n)}(\psi(t_{0})),\ \ 0\leq
j< q_{n-1}.$$
Denote by $\overline {b}_{1}$ the $q_{n}$-preimage  of the second break point ${b}_{1}$ of $f_1$ in $ \Delta_{0}^{(n-1)}(t_{0}) \cup \Delta_{0}^{(n)}(t_{0}),$
such that $f_{1}^{p}(\overline {b}_{1})=b_{1}$ for some $0\leq p < q_{n}.$
 The $\psi$-preimages of the points
 $\overline{a}_{1}$ and $\overline{b}_{1}$ lie in
 $C^{(n-1)}_{0} (\psi(t_{0}))\cup C^{(n)}_{0}(\psi(t_{0}))$.
Using  relation (5) we get
$$
f_{2}^{l}(\psi(\overline{a}_1))=f_{2}^{l-1}
(f_{2}(\psi(\overline{a}_1)))=
f_{2}^{l-1}(\psi(f_{1}(\overline{a}_1)))=...=
\psi(f_{1}^{l}(\overline{a}_1)) =\psi(a_{1})=a_{2}.
$$
Similarly one shows $f_{2}^{p}(\psi(\overline{b}_1))=b_{2}$.

For $\varepsilon>0$ define the two neighbourhoods $U_{n}, V_{n}$ of the point $\overline{a}_{1}\in S^1$ as

$$U_{n}(\overline{a}_{1})=\{z\in S^1: \hat{z}\in  (\hat{\overline{a}}_{1}-\delta_{n},\, \hat{\overline{a}}_{1}+\delta_{n}) \,\, \text{with}\,\,\, \delta_n =
\frac{1}{4}l([t_{0},f^{q_{n-1}}_{1}(t_{0})])\sqrt{\varepsilon}\},$$
$$V_{n}(\overline{a}_{1})=\{z\in S^1: \hat{z}\in (\hat{\overline{a}}_{1}-\gamma_{n},\, \hat{\overline{a}}_{1}+\gamma_{n})\,\, \text{with} \,
\,\,\gamma_n=\frac{1}{2}l([t_{0},f^{q_{n-1}}_{1}(t_{0})])\sqrt[4]{\varepsilon}\}.$$
It is clear that $U_{n}(\overline{a}_{1})\subset
V_{n}(\overline{a}_{1})\subset [t_{0},f^{q_{n-1}}_{1}(t_{0})]$
(see Figure 2).
\begin{figure}
\centering
\includegraphics[width=14cm]{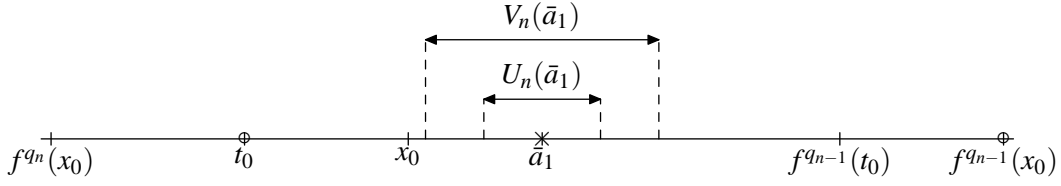}
\caption{The intervals $U_{n}(\overline{a}_{1})$ and $V_{n}(\overline{a}_{1})$
are $\sqrt{\varepsilon}$ and $\sqrt[4]{\varepsilon}$ comparable with
$[t_{0},f_{1}^{q_{n-1}}(t_{0})]$ respectively.}
\end{figure}

Then two cases are possible:

$\textbf{either} \,\,\, \overline{b}_{1} \in U_{n}(\overline{a}_{1})\,\,\,
\textbf{or}\,\,\,\overline{b}_{1} \notin U_{n}(\overline{a}_{1})\,\, i.e.\,\,
\overline{b}_{1} \in \left[f^{q_{n}}(t_{0}), f^{q_{n-1}}(t_{0})\right]\setminus U_{n}(\overline{a}_{1}).
$\\
Consider first the case   $\overline{b}_{1} \in U_{n}(\overline{a}_{1}).$
  If $\hat{\overline{b}}_{1}$ lies on the left hand side of the point  $\hat{\overline{a}}_{1}$  we define
\begin{eqnarray}\label{eq46}
\hat{z}_{1}:=\hat{\overline{a}}_{1}-\frac{1}{2} l([t_{0},f^{q_{n-1}}(t_{0})])\sqrt[4]{\varepsilon}, \,\
\hat{z}_{2}:=\hat{\overline{a}}_{1},\nonumber\\
\hat{z}_{3}:=\hat{\overline{a}}_{1}+\frac{1}{4}l([t_{0},f^{q_{n-1}}(t_{0})]),\,\,\
\hat{z}_{4}:=\hat{\overline{a}}_{1}+\frac{1}{2}l([t_{0},f^{q_{n-1}}(t_{0})]),
\end{eqnarray}
corresponding to the points  $z_{i}\in S^{1},\,i=1,2,3,4$, with $z_{2}=\overline {a}_{1}$ and
$z_{1}\prec z_{2}\prec z_{3}\prec  z_{4} \prec z_{1}. $
If on the other hand  $\hat{\overline{b}}_{1}$ is on right hand side of $\hat{\overline{a}}_{1},$ we define
\begin{eqnarray}\label{eq400006}
\hat{z}_{1}:=\hat{\overline{a}}_{1}-\frac{1}{2}l([t_{0},f^{q_{n-1}}(t_{0})]), \,\
\hat{z}_{2}:=\hat{\overline{a}}_{1}-\frac{1}{4}l([t_{0},f^{q_{n-1}}(t_{0})]),\nonumber\\
\hat{z}_{3}:=\hat{\overline{a}}_{1}, \,\,\
\hat{z}_{4}:= \hat{\overline{a}}_{1}+\frac{1}{2} l([t_{0},f^{q_{n-1}}(t_{0})])\sqrt[4]{\varepsilon},
\end{eqnarray}
corresponding to  the points  $z_{i}\in S^{1},\,i=1,2,3,4$, with $z_{3}=\overline {a}_{1}$ and
$z_{1}\prec z_{2}\prec z_{3}\prec  z_{4} \prec z_{1}$.  in the following we consider only the first case, the second one can be handled similarly.
Then one shows
\begin{lemm}\label{lemm4.4} Suppose the circle homeomorphism $f_1$
satisfies the conditions of Lemma \ref{lem2.1}. Let $\delta>0$ be the constant determined by Lemma \ref{lemm4.2} and
let for large enough $n$ the points  $\hat{z}_i,\, i=1,2,3 4$  defined  in (\ref{eq46}) be the lifts of the points $z_{i}\in S^{1}, i=1,2,3,4$.  Then the triple of
intervals $[z_s, z_{s+1}], s=1,2,3 $  has the following
properties:
\begin{itemize}
\item[(1)] $[z_1,z_4]$, $[f^{q_n}_1(z_1),f^{q_n}_1 (z_4)]\subset U_\delta (x_0) =\{z\in S^1: \hat{x} \in
(\hat{x}_0-\delta, \hat{x}_0+\delta)$;
\item[(2)]
the intervals $[z_s,z_{s+1}]$, $[f^{q_n}_1(z_s),f^{q_n}_1
(z_{s+1})]$, \,\,$s=1,2,3,$  satisfy conditions $(C_{R_1,\varepsilon})$
for some constant $R_{1}>1$
depending only on
the variation $v$  of $\log Df_1$.
\end{itemize}
\end{lemm}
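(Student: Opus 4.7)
The plan is to verify both claims by exploiting the bounded distortion estimates afforded by the Denjoy machinery of Lemmas \ref{lemm2.2}--\ref{lem2.1} applied to $q_{n-1}$- and $q_n$-small intervals. First I would show that $l([t_{0},f_{1}^{q_{n-1}}(t_{0})])$ is comparable, with constants depending only on $v$, to the dynamical length $l(\Delta_{0}^{(n-1)}(x_{0}))$: since $\overline{a}_{1}$ is the midpoint of $[t_{0},f_{1}^{q_{n-1}}(t_{0})]$ and lies in $\Delta_{0}^{(n-1)}(x_{0})\cup\Delta_{0}^{(n)}(x_{0})$, the points $t_{0}$ and $x_{0}$ belong to a common $q_{n-1}$-small interval, and Lemma \ref{lemm2.2} applied to $f_{1}^{q_{n-1}}$ on this interval yields the comparison together with $l([x_{0},t_{0}])\le const\cdot l([t_{0},f_{1}^{q_{n-1}}(t_{0})])$. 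Combined with Corollary \ref{cor2} this shows that $l([t_{0},f_{1}^{q_{n-1}}(t_{0})])\to 0$ exponentially in $n$.

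For claim (1), the interval $[z_{1},z_{4}]$ is contained in a neighbourhood of $\overline{a}_{1}$ of length at most $l([t_{0},f_{1}^{q_{n-1}}(t_{0})])$, hence at distance $O(\lambda^{n})$ from $x_{0}$, so it lies in $U_{\delta}(x_{0})$ once $n$ is large. For its $f_{1}^{q_{n}}$-iterate I would use that $[z_{1},z_{4}]\subset[t_{0},f_{1}^{q_{n-1}}(t_{0})]$ is $q_{n}$-small, so Corollary \ref{cor3} gives $l(f_{1}^{q_{n}}([z_{1},z_{4}]))=O(\lambda^{n})$, while $f_{1}^{q_{n}}(\overline{a}_{1})$ lies within $O(\lambda^{n})$ of $x_{0}$ because $f_{1}^{q_{n}}(t_{0})$ belongs to an interval of $\xi_{n+1}(x_{0})$ neighbouring $x_{0}$. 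Thus $f_{1}^{q_{n}}([z_{1},z_{4}])\subset U_{\delta}(x_{0})$ for large $n$.

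For claim (2), conditions (a) and (b) of Definition \ref{condition C} hold for the original quadruple directly from (\ref{eq46}), with $R_{1}=2$, since $l([z_{1},z_{2}])/l([z_{2},z_{3}])=2\sqrt[4]{\varepsilon}$ and $l([z_{3},z_{4}])/l([z_{2},z_{3}])=1$. For the iterated quadruple, Lemma \ref{lemm2.2} with $k=q_{n}$ applied on the $q_{n}$-small interval containing the $z_{i}$ shows that each length ratio $l([f_{1}^{q_{n}}(z_{s}),f_{1}^{q_{n}}(z_{s+1})])/l([z_{s},z_{s+1}])$ is pinched between $e^{-v}$ and $e^{v}$ times a common intermediate value of $D\hat{f}_{1}^{q_{n}}$, so the two ratios transfer up to an overall factor $e^{2v}$ and (a), (b) hold with $R_{1}=2e^{2v}$. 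Condition (c) for the original follows from $l([x_{0},z_{i}])\le l([x_{0},\overline{a}_{1}])+l([\overline{a}_{1},z_{i}])\le const\cdot l([t_{0},f_{1}^{q_{n-1}}(t_{0})])=4\,const\cdot l([z_{2},z_{3}])$ via the first step; the same bound for the iterate then follows from the bounded distortion of $f_{1}^{q_{n}}$ on the $q_{n}$-small interval containing $x_{0}$, $\overline{a}_{1}$ and the $z_{i}$, together with the fact that $f_{1}^{q_{n}}(x_{0})$ lies in $\Delta_{0}^{(n)}(x_{0})$, a neighbour of $x_{0}$ of length $O(\lambda^{n})$.

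The main technical obstacle is to keep all comparability constants absolute, i.e.\ depending only on the variation $v$ (and not on $\varepsilon$ or $n$). This forces a careful identification of the common $q_{n-1}$- and $q_{n}$-small intervals on which to apply Lemma \ref{lemm2.2} simultaneously to $x_{0}$, $t_{0}$, $\overline{a}_{1}$ and the $z_{i}$, and in particular a uniform comparison between $l([t_{0},f_{1}^{q_{n-1}}(t_{0})])$ and $l(\Delta_{0}^{(n-1)}(x_{0}))$; once this is established, all the required inequalities assemble into $(C_{R_{1},\varepsilon})$ with a universal $R_{1}$ of the form $R_{1}=const\cdot e^{4v}$.
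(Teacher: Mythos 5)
Your proposal follows essentially the same strategy as the paper: reduce the containment and comparability statements to Denjoy-type bounded-distortion estimates on a bounded number of $q_{n-1}$- and $q_n$-small dynamical intervals containing $x_0$, $t_0$, $\overline{a}_1$, and the $z_i$, then verify (a), (b), (c) directly from (\ref{eq46}) plus the $e^v$-comparability of Corollary \ref{cor3} for the $f_1^{q_n}$-iterated quadruple. One step is stated too strongly: you claim $t_0$ and $x_0$ ``belong to a common $q_{n-1}$-small interval,'' which need not hold --- the paper only establishes $x_{-2q_{n-1}} \prec t_0 \prec x_{2q_{n-1}}$, so $t_0$ may sit several $q_{n-1}$-steps from $x_0$, and the paper instead chains comparability across the six intervals in the decomposition of $[x_{-3q_{n-1}},x_{3q_{n-1}}]$ and the inclusion $[x_{-3q_{n-1}},x_{3q_{n-1}}]\subset[t_{-5q_{n-1}},t_{5q_{n-1}}]$ to get $l([x_{-3q_{n-1}},x_{3q_{n-1}}])\le 10\,e^{5v_1}\,l([t_0,f_1^{q_{n-1}}(t_0)])$. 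Since you already flag this as the main place requiring care and the fix only changes the power of $e^{v}$ in $R_1$, this is an imprecision rather than a structural flaw.
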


\begin{lemm}\label{lemm4.5} Assume the circle
homeomorphisms $f_{i},\,\,i=1,2,$ satisfy the conditions of
Theorem \ref{ADM1}. Let  $z_i\in S^1, \ i=1,2,3,4,$ be the points defined in Lemma \ref{lemm4.4}.
 Then the following inequalities hold for
sufficiently large $n$:
\begin{eqnarray}\label{eq48}
\Big|  Dist (z_{1},z_{2},z_{3},z_{4};f_{1}^{q_{n}})-
\sigma_{f_{1}}(a_{1}) \cdot\sigma_{f_{1}}(b_{1})\Big|\leq R_{2}\sqrt[4]{\varepsilon,}\\
\Big|  Dist (\psi(z_{1}),\psi(z_{2}),\psi(z_{3}),\psi(z_{4});f_{2}^{q_{n}})-
\sigma_{f_{2}}(a_{2}) \cdot\sigma_{f_{2}}(b_{2})\Big|\leq R_{2}\sqrt[4]{\varepsilon,}
\end{eqnarray}

where  the positive  constant $R_{2}=R_{2}(R_{1},f_{1},f_{2})$  does not depend on $\varepsilon.$
\end{lemm}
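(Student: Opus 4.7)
The strategy is to exploit the multiplicativity of the cross-ratio distortion,
\begin{eqnarray*}
Dist(z_1,z_2,z_3,z_4;f_1^{q_n}) = \prod_{k=0}^{q_n-1} Dist\bigl(f_1^k(z_1),f_1^k(z_2),f_1^k(z_3),f_1^k(z_4);f_1\bigr),
\end{eqnarray*}
and to split the $q_n$ factors into \emph{smooth iterations} (those whose interval $[f_1^k(z_1),f_1^k(z_4)]$ contains no break point) and \emph{break iterations} (those where a break point of $f_1$ is present). Because $[z_1,z_4]\subset[t_0,f_1^{q_{n-1}}(t_0)]=\Delta_0^{(n-1)}(t_0)$ sits inside a single element of the dynamical partition $\xi_n(t_0)$, the iterates $f_1^k([z_1,z_4])$, $0\le k<q_n$, are mutually disjoint and each has length at most $C\lambda^n$ by Corollary \ref{cor2}. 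By the defining property of the $q_n$-preimages $\bar a_1=z_2$ and $\bar b_1$, the break point $a_1$ lies in $[f_1^k(z_1),f_1^k(z_4)]$ only at the index $k=l$ (with $f_1^l(z_2)=a_1$), and $b_1$ only at $k=p$ (with $f_1^p(\bar b_1)=b_1$); thus there are exactly two break iterations.

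For the $q_n-2$ smooth iterations Lemma \ref{lemm3.1} gives $|Dist-1|\le K|f_1^k(\hat z_4)-f_1^k(\hat z_1)|^{1+\alpha}$, and summing via disjointness ($\sum_k|f_1^k([z_1,z_4])|\le 1$) together with Corollary \ref{cor2} yields a total smooth contribution of $1+O(\lambda^{n\alpha})$, which for $n$ large is absorbed into $\sqrt[4]{\varepsilon}$. At the break iteration $k=l$ the point $a_1=f_1^l(z_2)$ sits at the common endpoint of $[f_1^l(z_1),f_1^l(z_2)]$ and $[f_1^l(z_2),f_1^l(z_3)]$, so in the notation of Lemma \ref{lemm3.4}(i) one has $\zeta^{(l)}=0$. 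Lemma \ref{lemm2.2}, applied through mean-value representatives in the $q_n$-close intervals $[z_1,z_2]$ and $[z_2,z_3]$, shows that $\xi^{(l)}:=|f_1^l(z_3)-f_1^l(z_2)|/|f_1^l(z_2)-f_1^l(z_1)|$ differs from $l([z_2,z_3])/l([z_1,z_2])=1/(2\sqrt[4]{\varepsilon})$ only by a factor in $[e^{-v},e^v]$, whence $\xi^{(l)}\ge c\,\varepsilon^{-1/4}$ with $c=e^{-v}/2$. Using the algebraic identity $\sigma(1+\xi)/(\sigma+\xi)-\sigma=\sigma(1-\sigma)/(\sigma+\xi)=O(1/\xi)$ together with the $O(\lambda^n)$ error from Lemma \ref{lemm3.4}(i), the step-$l$ factor equals $\sigma_{f_1}(a_1)+O(\sqrt[4]{\varepsilon})$. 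The step-$p$ analysis is parallel: since $\bar b_1\in U_n(\bar a_1)$ one has $|z_2-\bar b_1|\le \tfrac14 l([t_0,f_1^{q_{n-1}}(t_0)])\sqrt{\varepsilon}$, giving $\zeta^{(p)}=O(\sqrt[4]{\varepsilon})$ and again $\xi^{(p)}\ge c\,\varepsilon^{-1/4}$, so Lemma \ref{lemm3.4}(i) yields $\sigma_{f_1}(b_1)+O(\sqrt[4]{\varepsilon})$. Multiplying the smooth product with the two break factors establishes the first inequality.

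The second inequality follows by running the identical argument for $f_2^{q_n}$ on the quadruple $(\psi(z_1),\ldots,\psi(z_4))$. The conjugacy identity $\psi\circ f_1^k=f_2^k\circ\psi$ implies $a_2=f_2^l(\psi(\bar a_1))$ and $b_2=f_2^p(\psi(\bar b_1))$, so break-point crossings for $f_2$ occur at the same two steps $l$ and $p$; by Lemma \ref{lemm4.2}, the quadruple $(\psi(z_i))$ satisfies conditions $(C_{R_1',\varepsilon})$ for some $R_1'=R_1'(\omega,R_1)$, and since $\psi$ is a homeomorphism the iterates $f_2^k(\psi([z_1,z_4]))=\psi(f_1^k([z_1,z_4]))$ remain mutually disjoint and $q_n$-small for $f_2$, so the smooth sum and both break estimates go through verbatim with $f_1$ replaced by $f_2$. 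The main technical obstacle is the precise control of the ratios $\xi^{(l)},\xi^{(p)},\zeta^{(p)}$ via Lemma \ref{lemm2.2}; the built-in two-scale design ($\sqrt[4]{\varepsilon}$ for the width of $[z_1,z_2]$ versus $\sqrt{\varepsilon}$ for the diameter of $U_n(\bar a_1)$) is exactly what simultaneously forces $\xi$ to be large (so the leading term in Lemma \ref{lemm3.4}(i) converges to $\sigma$) and $\zeta$ to be small (so the formula does not drift from the jump ratio), thereby isolating the product of jump ratios from all geometric distortion.
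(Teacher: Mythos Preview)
Your proof is correct and follows essentially the same route as the paper's: multiplicative decomposition of the distortion into $q_n$ factors, Lemma~\ref{lemm3.1} for the $q_n-2$ smooth factors (giving $1+O(\lambda^{n\alpha})$ via disjointness), Lemma~\ref{lemm3.4}(i) at the two break steps $l,p$, and Lemma~\ref{lemm2.2} to compare the ratios $\xi^{(l)},\xi^{(p)},\zeta^{(p)}$ with their initial values $\vartheta(0)=1/(2\sqrt[4]{\varepsilon})$ and $\tau(0)=O(\sqrt[4]{\varepsilon})$, after which the algebraic estimate $|G(\xi,\sigma)-\sigma|=O(1/\xi)$ yields the $\sqrt[4]{\varepsilon}$ bound. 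Your explicit invocation of Lemma~\ref{lemm4.2} to transport the $(C_{R_1,\varepsilon})$ geometry to the quadruple $(\psi(z_i))$ for the second inequality is a detail the paper leaves to ``similar arguments''.
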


After these preparations we can now proceed to the  proof of Theorem \ref{ADM1}.\\

\textbf{Proof of Theorem \ref{ADM1}}. Let $f_1$ and $f_2$ be circle
homeomorphisms satisfying the conditions of Theorem \ref{ADM1}.
  The lift $\hat{\psi}(\hat{x})$ of the conjugating map
$\psi(x)$ is a continuous and monotone increasing function on $R^{1}$.
 Hence $\hat{\psi}(\hat{x})$ has a finite
 derivative $D\hat{\psi}(x)$
 almost everywhere (w.r.t. Lebesgue measure) on $R^{1}$.
 Recall that  $D\hat{\psi}(\hat{x}+1)=D\hat{\psi}(\hat{x})$
for each
  $\hat{x}\in R^{1}$ where the  derivative $D\hat{\psi}(\hat{x} )$
is defined.
 It is enough to  show that $D\hat{\psi}(\hat{x})=0$ for almost all
 points $\hat{x}$ of the interval $[0,1).$ Suppose $D\hat{\psi}(\hat{x}_{0})=\omega>0$ for
some point $\hat{x}_{0}\in [0, 1)$ corresponding to the point $x_{0}\in S^{1}.$
Choose an $\varepsilon>0$ and the points $z_i \in S^1 , \,\,\, i=1,2,3,4$, with lifted
 vector $( \hat{z}_{1},\hat{z}_{2}, \hat{z}_{3},  \hat{z}_{4})$ as
 defined in (\ref{eq46}). Then by the second assertion of Lemma \ref{lemm4.4}
the intervals  $[z_s,z_{s+1}]$, $[f^{q_n}_1(z_s),f^{q_n}_1
(z_{s+1})]$, \,\,$s=1,2,3,$  satisfy conditions $(C_{R_1,\varepsilon})$ for
some constant $R_{1}>1$
depending only on
the variation $v$ of $\log Df_{1}$.

 Lemma \ref{lemm4.3} next implies
\begin{eqnarray}\label{eqn49}
|Dist (z_{1}, z_{2},z_{3},z_{4};\psi)-1|\leq
R_{2}\sqrt{\varepsilon}
\end{eqnarray}
and
\begin{eqnarray}\label{eqn410}
|Dist (f_{1}^{q_{n}}(z_{1}),
f_{1}^{q_{n}}(z_{2}),f_{1}^{q_{n}}(z_{3}),f_{1}^{q_{n}}(z_{4});\psi)-1|\leq
R_{2}\sqrt{\varepsilon}.
\end{eqnarray}
Hence
\begin{eqnarray}\label{eqn411}
\Big|\frac{Dist (f_{1}^{q_{n}}(z_{1}),
f_{1}^{q_{n}}(z_{2}),f_{1}^{q_{n}}(z_{3}),f_{1}^{q_{n}}(z_{4});\psi)}{Dist (z_{1},
z_{2},z_{3},z_{4};\psi)}-1\Big|\leq R_{3}\sqrt{\varepsilon},
\end{eqnarray}
where the constant $R_{3}>0$ does not depend on $\varepsilon$ and
$n$.\\
But by definition

$Dist (f_{1}^{q_n}(z_{1}),f_{1}^{q_n}(z_{2}),
f_{1}^{q_n}(z_{3}),f_{1}^{q_n}(z_{4}); \psi)=$

$$=\frac{Cr (\psi(f_{1}^{q_{n}}(z_{1})),
\psi(f_{1}^{q_{n}}(z_{2})),\psi(f_{1}^{q_{n}}(z_{3})),\psi(f_{1}^{q_{n}}(z_{4})))}{Cr (f_{1}^{q_n}(z_{1}),f_{1}^{q_n}(z_{2}),
f_{1}^{q_n}(z_{3}),f_{1}^{q_n}(z_{4}))}.$$
 Since $\psi$ is
conjugating $f_1$ and $f_2$ we can readily see that

$Cr (\psi(f_{1}^{q_{n}}(z_{1})),
\psi(f_{1}^{q_{n}}(z_{2})),\psi(f_{1}^{q_{n}}(z_{3})),\psi(f_{1}^{q_{n}}(z_{4})))=$

$$=Cr (f_{2}^{q_{n}}(\psi(z_{1})),
f_{2}^{q_{n}}(\psi(z_{2})),f_{2}^{q_{n}}(\psi(z_{3})),f_{2}^{q_{n}}(\psi(z_{4}))).$$

It now follows that  $\,\,\,$

$$ \frac{Dist (f_{1}^{q_{n}}(z_{1}),
f_{1}^{q_{n}}(z_{2}),f_{1}^{q_{n}}(z_{3}),f_{1}^{q_{n}}(z_{4});\psi)}{Dist (z_{1},
z_{2},z_{3},z_{4};\psi)}=$$

$$=\frac{Cr (\psi(f_{1}^{q_{n}}(z_{1})),
\psi(f_{1}^{q_{n}}(z_{2})),\psi(f_{1}^{q_{n}}(z_{3})),\psi(f_{1}^{q_{n}}(z_{4})))}{Cr (f_{1}^{q_{n}}(z_{1}),
f_{1}^{q_{n}}(z_{2}),f_{1}^{q_{n}}(z_{3}),f_{1}^{q_{n}}(z_{4}))}\times
\frac{Cr (z_{1}, z_{2},z_{3},z_{4})}{Cr (\psi(z_{1}),\psi(
z_{2}),\psi(z_{3}),\psi(z_{4}))}=$$

$$=\frac{Cr (f_{2}^{q_{n}}(\psi(z_{1})),
f_{2}^{q_{n}}(\psi(z_{2})),f_{2}^{q_{n}}(\psi(z_{3})),f_{2}^{q_{n}}(\psi(z_{4})))}{Cr (\psi(z_{1}),\psi(
z_{2}),\psi(z_{3}),\psi(z_{4}))}:\frac{Cr (f_{1}^{q_{n}}(z_{1}),
f_{1}^{q_{n}}(z_{2}),f_{1}^{q_{n}}(z_{3}),f_{1}^{q_{n}}(z_{4}))}{Cr (z_{1},
z_{2},z_{3},z_{4})}\nonumber\\ =$$

 $$=\frac{Dist (\psi(z_{1}),\psi(
z_{2}),\psi(z_{3}),\psi(z_{4});f_{2}^{q_{n}})}{Dist (z_{1},
z_{2},z_{3},z_{4};f_{1}^{q_{n}})}.\nonumber\\
$$

Combining this with inequality (\ref{eqn411}) we get
\begin{eqnarray}\label{eqn412}
\Big|\frac{Dist (\psi(z_{1}),\psi(
z_{2}),\psi(z_{3}),\psi(z_{4});f_{2}^{q_{n}})}{Dist (z_{1},
z_{2},z_{3},z_{4};f_{1}^{q_{n}})}-1\Big|\leq
R_{3}\sqrt{\varepsilon}.
\end{eqnarray}

But using  Lemma \ref{lemm4.5} we get
\begin{eqnarray}\label{eqn413}
\Big|\frac{Dist (\psi(z_{1}),\psi(z_{2}),\psi(z_{3}),\psi(z_{4});
f_{2}^{q_{n}})}{Dist (z_{1},
z_{2},z_{3},z_{4};f_{1}^{q_{n}})}-\frac{\sigma_{f_{2}}(a_{2})
 \cdot\sigma_{f_{2}}(b_{2}) }{\sigma_{f_{1}}(a_{1}) \cdot\sigma_{f_{1}}(b_{1}) }\Big|
\leq Const\sqrt[4]{\varepsilon}
\end{eqnarray}
for sufficiently large $n$. This contradiction proves Theorem \ref{ADM1} in the first case.\\
There remains the case where the  point $\overline{b}_{1}$
belongs to the set
 $\left[f^{q_{n}}_{1}(t_{0}), f^{q_{n-1}}_{1}(t_{0})\right]\setminus U_{n}(\overline{a}_{1}).$

Let  $\hat{\overline{b}}_{1}$ lie on the left hand side of the point  $\hat{\overline{a}}_{1}$, 
the case $\hat{\overline{b}}_{1}$  on the right hand side of $\hat{\overline{a}}_{1}$
 can be  handled similarly. We define
\begin{eqnarray}\label{eq146}
\hat{z}_{1}:=\hat{\overline{a}}_{1}-\frac{1}{4} l([t_{0},f^{q_{n-1}}(t_{0})])\sqrt{\varepsilon}, \,\
\hat{z}_{2}:=\hat{\overline{a}}_{1},\nonumber\\
\hat{z}_{3}:=\hat{\overline{a}}_{1}+\frac{1}{4}l([t_{0},f^{q_{n-1}}(t_{0})]),
\hat{z}_{4}:=\hat{\overline{a}}_{1}+\frac{1}{2}l([t_{0},f^{q_{n-1}}(t_{0})]),
\end{eqnarray}
which determine  the points  $z_{i}\in S^{1},\,i=1,2,3,4$ with $z_{2}=\overline {a}_{1}$ and
$z_{1}\prec z_{2}\prec z_{3}\prec  z_{4} \prec z_{1}. $
The proof of Theorem \ref{ADM1} for the corresponding intervals
$[z_{s},z_{s+1}]$,\, $s=1,2,3$,
proceeds now exactly as in the previous case. This concludes the proof of Theorem \ref{ADM1}.\\

\section{ Proofs of Lemmas \ref{lemm4.2} - \ref{lemm4.5}.}

We start with the proof of Lemma \ref{lemm4.2}.
\begin{proof}
Suppose, the
derivative $D\hat{\psi}(\hat{x}_{0})$ exists and $D\hat{\psi}(\hat{x}_{0})=\omega>0$ for the lift $\hat{x}_{0}\in [0,1)$
of some point $x_{0}$ in $ S^{1}$.
 By the definition of the derivative there exists for any
$\varepsilon>0$ a number \,\,
$\delta=\delta(x_{0},\varepsilon)\in{(0,d(x_{0}))}$ such that, for
all $\hat{x}\in (\hat{x}_{0}-\delta, \ \hat{x}_{0}+\delta),$
\begin{eqnarray}\label{eqn27}
\omega-\varepsilon<\frac{\hat{\psi}(\hat{x})-\hat{\psi}(\hat{x}_{0})}{\hat{x}-\hat{x}_{0}}<\omega+\varepsilon.\end{eqnarray}
Now take four points $\hat{z}_{i} \in (\hat{x}_{0}-\delta, \ \hat{x}_{0}+\delta)\subset [0, 1)$
satisfying conditions $(C_{R_1,\varepsilon})$. W.l.o.g.
we can assume that $[{z}_1,{z}_4]\subset U_\delta (x_{0})$
and $z_{1}\prec z_{4}\prec x_{0}\prec z_{1}$. Relation
(\ref{eqn27}) then implies for $\hat{x}=\hat{z}_{i},$ $ i=1,2,3,4$
$$
(\omega-\varepsilon)(\hat{x}_{0}-\hat{z}_{i})<\hat{\psi}(\hat{x}_{0})-\hat{\psi}(\hat{z}_{i})<
(\omega +\varepsilon)(\hat{x}_{0}-\hat{z}_{i}).
$$

This yields the following inequalities for $\hat{z}_{s}, \, s=1,2,3$

\begin{eqnarray}\label{eqn28}
 \omega-\varepsilon\frac{(\hat{x}_{0}-\hat{z}_{s+1})+(\hat{x}_{0}
-\hat{z}_{s})}{\hat{z}_{s+1}-\hat{z}_{s}}&<&
\frac{\hat{\psi}(\hat{z}_{s+1})-\hat{\psi}(\hat{z}_{s})}{\hat{z}_{s+1}-\hat{z}_{s}}\nonumber\\
&<&
\omega+\varepsilon
\frac{(\hat{x}_{0}-\hat{z}_{s+1})+(\hat{x}_{0}-\hat{z}_{s})}{\hat{z}_{s+1}-\hat{z}_{s}},
\end{eqnarray}
respectively for $s=1,2$
\begin{eqnarray}\label{eqn29}
\omega -\varepsilon\frac{(\hat{x}_{0}-\hat{z}_{s+2})+(\hat{x}_{0}-\hat{z}_{s})}{\hat{z}_{s+2}-\hat{z}_{s}}&\leq&
\frac{\hat{\psi}(\hat{z}_{s+2})-\hat{\psi}(\hat{z}_{s})}{\hat{z}_{s+2}-\hat{z}_{s}}\nonumber\\
&\leq&\omega+\varepsilon\frac{(\hat{x}_{0}-\hat{z}_{s+2})+(\hat{x}_{0}-\hat{z}_{s})}{\hat{z}_{s+2}-\hat{z}_{s}}.
\end{eqnarray}
Since the points $z_{i}, i=1,2,3,4$,  satisfy conditions $(C_{R_1,\varepsilon}),$ it is easy to show
 that
\begin{eqnarray}\label{eqn30}
\underset{1\leq i\leq
4}{\max}\Big\{\frac{\hat{x}_{0}-\hat{z}_{i}}{\hat{z}_{2}-\hat{z}_{1}}\Big\}\leq{R_{1}\frac{\hat{z}_{3}-\hat{z}_{2}}{\hat{z}_{2}-\hat{z}_{1}}\leq\frac{R^{2}_{1}}
{\sqrt{\varepsilon}}},
\end{eqnarray}
\begin{eqnarray}\label{eqn31}
\underset{1\leq i\leq
4}{\max}\Big\{\frac{\hat{x}_{0}-\hat{z}_{i}}{\hat{z}_{3}-\hat{z}_{2}},\frac{\hat{x}_{0}-\hat{z}_{i}}{\hat{z}_{4}-\hat{z}_{3}}\Big\}\leq{R^{2}_{1}}.
\end{eqnarray}
Combining relations (\ref{eqn28}), (\ref{eqn29}), (\ref{eqn30}) and
 (\ref{eqn31})
 we get

\begin{eqnarray}\label{eqn32}
\omega-C_{4}\sqrt{\varepsilon}\leq
\frac{\hat{\psi}(\hat{z}_{2})-\hat{\psi}(\hat{z}_{1})}{\hat{z}_{2}-\hat{z}_{1}}\leq
\omega+C_{4}\sqrt{\varepsilon};
\end{eqnarray}
  for $l=2,3$ we get
\begin{eqnarray}\label{eqn33}
\omega-C_{4}\varepsilon\leq\frac{\hat{\psi}(\hat{z}_{l+1})-\hat{\psi}(\hat{z}_{l})}{\hat{z}_{l+1}-\hat{z}_{l}}\leq\omega+C_{4}\varepsilon,
\end{eqnarray}
respectively for $s=1,2$

\begin{eqnarray}\label{eqn34}
\omega-C_{4}\varepsilon\leq\frac{\hat{\psi}(\hat{z}_{s+2})-\hat{\psi}(\hat{z}_{s})}{\hat{z}_{s+2}-\hat{z}_{s}}\leq\omega+C_{4}\varepsilon,
\end{eqnarray}
 where the constant $C_{4}>0$ depends on $R_{1}$, $\omega$, but does
not depend on $l([\hat{z}_{s},\hat{z}_{s+1}]),s=1,2,3$, and on $\varepsilon$.
Using the equality
$$\frac{\hat{\psi}(\hat{z}_{s+1})-\hat{\psi}(\hat{z}_{s})}{\hat{\psi}(\hat{z}_{s})-\hat{\psi}(\hat{z}_{s-1})}:\frac{\hat{z}_{s+1}-\hat{z}_{s}}{\hat{z}_{s}-\hat{z}_{s-1}}=
\frac{\hat{\psi}(\hat{z}_{s+1})-\hat{\psi}(\hat{z}_{s})}{\hat{z}_{s+1}-\hat{z}_{s}}\cdot\frac{\hat{z}_{s}-\hat{z}_{s-1}}{\hat{\psi}(\hat{z}_{s})-
\hat{\psi}(\hat{z}_{s-1})}$$
and relations (\ref{eqn32}),(\ref{eqn33}),(\ref{eqn34}) we get the
assertions of Lemma \ref{lemm4.2}.
\end{proof}

 Next we will prove Lemma \ref{lemm4.3}.\\
\begin{proof}
 Since
$$Dist (z_{l},z_{2},z_{3},z_{4},\psi)=
\frac{Cr (\hat{\psi}(\hat{z}_{1}),\hat{\psi}(\hat{z}_{2}),\hat{\psi}(\hat{z}_{3}),\hat{\psi}(\hat{z}_{4}))}
{Cr (\hat{z}_{1},\hat{z}_{2},\hat{z}_{3},\hat{z}_{4})}=
$$
$$=\frac{\hat{\psi}(\hat{z}_{2})-\hat{\psi}(\hat{z}_{1})}{\hat{z}_{2}-\hat{z}_{1}}\cdot
\frac{\hat{\psi}(\hat{z}_{4})-\hat{\psi}(\hat{z}_{3})}{\hat{z}_{4}-\hat{z}_{3}}\cdot
\frac{\hat{z}_{3}-\hat{z}_{1}}{\hat{\psi}(\hat{z}_{3})-\hat{\psi}(\hat{z}_{1})}
\cdot\frac{\hat{z}_{4}-\hat{z}_{2}}{\hat{\psi}(\hat{z}_{4})-
\hat{\psi}(\hat{z}_{2})}$$
inequalities (\ref{eqn32})-(\ref{eqn34}) prove Lemma  \ref{lemm4.3}.
\end{proof}
We continue with the proof of Lemma  \ref{lemm4.4}.
\begin{proof}
 We assume $n$ to be odd.
 Hence $f^{q_{n}}_{1}(z)\prec z \prec f^{q_{n-1}}_{1}(z)\prec f^{q_{n}}_{1}(z)$
for any point $z$ on the circle $S^{1}.$ The point $\overline{a}_{1}$ lies in the interval
$[f^{q_{n}}_{1}(x_{0}),f^{q_{n-1}}_{1}(x_{0})]$ and  is the middle point of the
interval $[t_{0},f^{q_{n-1}}_{1}(t_{0})].$ This and the structure of the orbits imply
$x_{-3q_{n-1}}\prec f^{q_{n}}_{1}(t_{0}) \prec t_{0} \prec f^{q_{n-1}}_{1}(t_{0})\prec x_{3q_{n-1}}. $
By construction $[z_{1},z_{4}]\subset[t_{0},f^{q_{n-1}}_{1}(t_{0})]. $
Consequently
 $[f^{q_{n}}_{1}(z_{1}),f^{q_{n}}_{1}(z_{4})]\subset[f^{q_{n}}_{1}(t_{0}),f^{q_{n}+q_{n-1}}_{1}(t_{0})]. $
Summarizing we get therefore
\begin{eqnarray}\label{eqn71}
[z_{1},z_{4}],[f^{q_{n}}_{1}(z_{1}),f^{q_{n}}_{1}(z_{4})]\subset [x_{-3q_{n-1}}, x_{3q_{n-1}} ].
\end{eqnarray}
Obviously
$$[x_{-3q_{n-1}}, x_{3q_{n-1}} ]=
[x_{-3q_{n-1}}, x_{-2q_{n-1}}] \cup [x_{-2q_{n-1}}, x_{-q_{n-1}}]
\cup[x_{-q_{n-1}}, x_{0}]\cup$$
\begin{eqnarray}\label{e71}
\cup[x_{0}, x_{q_{n-1}}]\cup[x_{q_{n-1}}, x_{2q_{n-1}}]\cup[x_{2q_{n-1}}, x_{3q_{n-1}}].
\end{eqnarray}
By Corollary \ref{cor3} the intervals
 $[x,y]$, $[f^{q_{n-1}}_{1}(x),f^{q_{n-1}}_{1}(y)]$ and
$[f^ {-q_{n-1}}_{1}(x),f^{-q_{n-1}}_{1}(y)]$
are $e^{v_{1}}$-comparable for any $x,y\in S^{1}. $ This together with  equation (\ref{e71})  and Corollary \ref{cor2}
 implies  that
$$
l([x_{-3q_{n-1}}, x_{3q_{n-1}} ])\leq (1+5e^{3v_{1}})l([x_{0}, x_{q_{n-1}}])\leq const \lambda^{n}_{1},
$$
for a constant $\lambda_{1} \in (0,1)$. For sufficiently large $n $ then obviously
$[x_{-3q_{n-1}}, x_{3q_{n-1}}]\subset (x_{0}-\delta, \, x_{0}+\delta)$.
This together with (\ref{eqn71}) implies the first assertion of Lemma \ref{lemm4.4}.

Next we will prove the second assertion of Lemma \ref{lemm4.4} .

By Corollary \ref{cor3} the intervals $[z_{s},z_{s+1}] $ and $[f^ {q_{n}}(z_{s}),f^{q_{n}}(z_{s+1})] $
 are $e^{v_{1}}$-comparable for all $s=1,2,3.$ Using the definition of the points $z_{s}, s=1,2,3,4$ and
the Denjoy inequality it is easy to verify that these intervals satisfy the assumptions a) and b)
 of conditions $(C_{R_1,\varepsilon})$\,. Using the relations
$$
[z_{1},z_{4}],[f^{q_{n}}_{1}(z_{1}),f^{q_{n}}_{1}(z_{4})]\subset [x_{-3q_{n-1}}, x_{3q_{n-1}}]
$$
we get
\begin{eqnarray}\label{eqq71}
\underset{1\leq s \leq
4}{\max} \Big\{ |\hat{x}_{0}-\hat{z}_{s}|, \, |\hat{x}_{0}-\hat{y}_{s}|\Big\} \leq
l([x_{-3q_{n-1}}, x_{3q_{n-1}}])
\end{eqnarray}
where ${y}_{s}:=f^{q_{n}}_{1}(z_{s})\,\ s=1,2,3,4.$
Now we want to compare the lengths of the intervals $[x_{-3q_{n-1}}, x_{3q_{n-1}}]$
and $[t_{0},f^{q_{n-1}}_{1}(t_{0})].$
Using the definition of $t_{0}$ it is easy to see
that $x_{-2q_{n-1}}\prec t_{0}\prec x_{2q_{n-1}}.$  Applying  
$f^{sq_{n-1}}_{1},\,s\in \mathbb{Z} $, to these relations we get
 $x_{(s-2)q_{n-1}}\prec t_{sq_{n-1}}\prec x_{(s+2)q_{n-1}},$
\,\,$s\in  \mathbb{Z}$.
In particular the last relations imply
$ t_{-5q_{n-1}}\prec x_{-3q_{n-1}},\,\,x_{3q_{n-1}} \prec t_{5q_{n-1}}$
and hence
$ [x_{-3q_{n-1}},\,x_{3q_{n-1}}] \subset [t_{-5q_{n-1}},\, t_{5q_{n-1}}].$
Consequently
\begin{eqnarray}\label{eqqq71}
l[(x_{-3q_{n-1}},\,x_{3q_{n-1}}])\leq l([t_{-5q_{n-1}},\, t_{5q_{n-1}}]).
\end{eqnarray}
But the intervals $[t_{-5q_{n-1}},t_{5q_{n-1}}]$ and
$[t_{0},f^{q_{n-1}}_{1}(t_{0})]$ are
$(1+2e^{v_{1}}+ 2e^{2v_{1}}+2e^{3v_{1}}+2e^{4v_{1}}+ e^{5v_{1}})$-comparable.
This together with eq. (\ref{eqq71}) implies
$l[(x_{-3q_{n-1}},\,x_{3q_{n-1}}])\leq 10 e^{5v_{1}}l([t_{0},f^{q_{n-1}}_{1}(t_{0})]).$
Finally, we conclude that the points $z_{s},\,s=1,2,3,4$ and $ f^{q_{n}}_{1}(z_{s}),\,s=1,2,3,4, $
satisfy conditions $(C_{R_1,\varepsilon})$ with the constant $R_{1}=40e^{5v_{1}}$.
This concludes the proof of Lemma \ref{lemm4.4}.
\end{proof}

Remains the proof of Lemma \ref{lemm4.5}.
\begin{proof}
Choose the points $z_s$,
$s=1,2,3,4$, according to formulas (\ref{eq46}) and consider the two
sets of intervals
  $\{\,f_{1}^{i}[z_{s}, z_{s+1}],$ \, $0\leq i <q_{n},$ \, $s=1,2,3\, \}$ and $\{\,f_{2}^{i}[\psi(z_s), \psi(z_{s+1})],$ \,\,$ 0\leq i <q_{n},$ \,\, $s=1,2,3\, \}$.
   By the construction of the intervals $[z_{s}, z_{s+1}],$ $s=1,2,3$, only the intervals
  $f_{1}^{l}([z_1,z_2])$
and
 $f_{1}^{p}([z_1,z_2])$
cover the break points $a_{1}$  respectively $b_{1}$, namely
$a_{1}=f_{1}^{l}(z_{2})$,\, $b_{1}\in f_{1}^{p}[z_{1},z_{2}).$

Similarly, alone the intervals
$f_{2}^{l}[\psi(z_1),\psi(z_2)]$
respectively
$f_{2}^{p}[\psi(z_1), \psi(z_2)]$
cover the break points $a_2$  respectively $b_2$, namely  $a_2=f_{2}^{l}(\psi(z_{2}))$ and $b_2 \in f_{2}^{p}[\psi(z_{1}),\psi(z_{2})).$
 Next we compare the distortions
$Dist (z_1,z_2,z_3,z_4;f_{1}^{q_{n}})$
 and
$Dist (\psi(z_{1}),\psi(z_{2}),\psi(z_{3}),\psi(z_{4});f_{2}^{q_{n}}).$
We estimate only the first distortion, the second one can be estimated analogously.
Rewriting it as
$$Dist (z_1,z_2,z_3,z_4;f_{1}^{q_{n}})= \prod_{\substack {i=0 \\
i\neq l,p }}^{q_n-1}
Dist (f_{1}^{i}(z_{1}),f_{1}^{i}(z_{2}),f_{1}^{i}(z_{3}),f_{1}^{i}(z_{4});
f_{1})\times$$
\begin{eqnarray}\label{eq66} \,\,\ \,\ \times Dist (f_{1}^{l}(z_{1}),f_{1}^{l}(z_{2}),f_{1}^{l}(z_{3}),f_{1}^{l}(z_{4});
f_{1})\times Dist (f_{1}^{p}(z_{1}),f_{1}^{p}(z_{2}),f_{1}^{p}(z_{3}),f_{1}^{p}(z_{4});
f_{1}),\end{eqnarray}

we apply Lemma \ref{lemm3.1} to obtain $$\,\,\prod_{\substack {i=0 \\ i\neq
l,p}}^{q_{n}-1} Dist (f_{1}^{i}(z_{1}),
f_{1}^i(z_{2}),f_{1}^i(z_{3}),f_{1}^{i}(z_{4});f_{1})=$$
\begin{eqnarray}\label{eq68}
=\exp\{\sum_{\substack {i=0 \\ i\neq l,p }}^{q_{n}-1}
\text{log}(1+O(|[f_{1}^{i}(z_{1}),f_{1}^{i}(z_4)]|^{1+\alpha}))\}.
\end{eqnarray}

By construction 
$[f_{1}^{i}(z_1),f_{1}^{i}(z_4)]\subset
[f_{1}^{i}(t_{0}),f_{1}^{i}(f_{1}^{q_{n-1}}(t_{0}))]$ for $ 0\leq i < q_n$.
 By Corollary \ref{cor2} the length of the last interval is bounded by
$\text{const} \,\lambda^{n}_{1}$. Thus we get for \,\, $ 0\leq i < q_n$
\begin{eqnarray}\label{eq69}
l([f_{1}^{i}(z_1),f_{1}^{i}(z_4)])\leq const \, \lambda^{n}_{1}.
\end{eqnarray}
 The interval $[t_{0},f_{1}^{q_{n-1}}(t_{0})]$ is $q_{n}$-small and hence
\begin{eqnarray}\label{eq70}
\sum_{i=0}^{q_{n}-1}l([f_{1}^{i}(z_1),f_{1}^{i}(z_4)])\leq 1.
\end{eqnarray}
 Combining equations (\ref{eq68}), (\ref{eq69}), (\ref{eq70}) we get
$$\big|\prod_{\substack {i=0 \\ i\neq l,p }}^{q_n-1}Dist (f^{i}_{1}(z_{1}),f^{i}_{1}(z_{2}),
f^{i}_{1}(z_{3}),f^{i}_{1}(z_{4});f_{1})-1\big|\leq$$
 \begin{eqnarray}\label{eq71}
 \leq const\,
\lambda^{n\alpha}_{1}\sum_{\substack {i=0 \\ i\neq l,p
}}^{q_{n}-1}l([f_{1}^{i}(z_1),f_{1}^{i}(z_4)])\leq const\,
\lambda^{n\alpha}_{1}.\end{eqnarray}
 Next we estimate the distortions
$$Dist (f^{l}_{1}(z_{1}),f^{l}_{1}(z_{2}),
f^{l}_{1}(z_{3}),f^{l}_{1}(z_{4});f_1)  \,\, \text{and} \,\,
Dist (f^{p}_{1}(z_{1}),f^{p}_{1}(z_{2}),
f^{p}_{1}(z_{3}),f^{p}_{1}(z_{4}); f_1).$$
Define for $0\leq m < q_{n}$ the length ratios
\begin{eqnarray}\label{eq72}
\vartheta(m):=\frac{l([f^{m}_{1}(z_{2}),f^{m}_{1}(z_{3})])}{l([f^{m}_{1}(z_{1}),f^{m}_{1}(z_{2})])},\,\,\,
\,\,\,
\tau(m):=\frac{l([f^{m}_{1}{(\bar{b}_{1})},f^{m}_{1}(z_{2})])}{l([f^{m}_{1}(z_{1}),f^{m}_{1}(z_{2})])}.
\end{eqnarray}
Lemma \ref{lemm2.2} then implies the following inequalities  \begin{eqnarray}\label{eq73}
e^{-v_{1}} \cdot\vartheta(0)\leq \vartheta(m) \leq e^{v_{1}}\cdot \vartheta(0),\,\,\,
e^{-v_{1}}\cdot\tau(0)\leq \tau(m)\leq e^{v_{1}}\cdot\tau(0).
\end{eqnarray}
Using the definitions of the points $z_{i},i=1,2,3$ we get
\begin{eqnarray}\label{eq74}
\vartheta(0)=\frac{1}{2\sqrt[4]{\varepsilon}},\,\,\,\,
0\leq \tau(0)\leq \frac{\sqrt[4]{\varepsilon}}{4}.
\end{eqnarray}

Define next for $x>0$ and $0\leq t\leq 1$ the functions $G(x, \sigma)$ and $F(x,t, \sigma)$ as
\begin{eqnarray}\label{eq75}
G(x,\sigma):=\frac{\sigma(1+x)}{\sigma +x},\,\,
F(x,t,\sigma):=\frac{[\sigma+(1-\sigma)t](1+x)}{\sigma+(1-\sigma)t+x}.
\end{eqnarray}

 Applying
Lemma \ref{lemm3.4} we get
\begin{eqnarray}\label{eq54}
|Dist (f^{l}_{1}(z_{1}),f^{l}_{1}(z_{2}),
f^{l}_{1}(z_{3}),f^{l}_{1}(z_{4});f_{1})-
G(\vartheta(l),\sigma_{f_{1}}(a_{1}))|\leq
K_{2} \,l([f^{l}_{1}(z_{1}),f^{l}_{1}(z_{4})]),
\end{eqnarray}
$$|Dist (f^{p}_{1}(z_{1}),f^{p}_{1}(z_{2}),
f^{p}_{1}(z_{3}),f^{p}_{1}(z_{4});f_{1})-
F(\vartheta(p),\tau(p),\sigma_{f_{1}}(b_{1}))|\leq$$
\begin{eqnarray}\label{eq76}
\,\,\,&&  K_{2} \,l([f^{p}_{1}(z_{1}),f^{p}_{1}(z_{4})]).
\end{eqnarray}
The definitions of the functions $G$,  $F$ together with  equations (\ref{eq72}) and (\ref{eq73}) imply
\begin{eqnarray}\label{eq77}
\mid G(\vartheta(l),\sigma_{f_{1}}(a_{1}))-G(\vartheta(0),\sigma_{f_{1}}(a_{1}))\mid
 \leq \,K_{3} \sqrt[4]{\varepsilon}, \\ \nonumber
\mid F(\vartheta(p),\tau(p),\sigma_{f_{1}}(b_{1}))
- F(\vartheta(0),\tau(0),\sigma_{f_{1}}(b_{1}))
\mid
 \,\leq K_{3} \sqrt[4]{\varepsilon},
\end{eqnarray}
\begin{eqnarray}\label{eq78}
\mid G(\vartheta(0),\sigma_{f_{1}}(a_{1}))-\sigma_{f_{1}}(a_{1})\mid
 \leq K_{3} \sqrt[4]{\varepsilon},\\ \nonumber
\mid F(\vartheta(0),\tau(0),\sigma_{f_{1}}(b_{1})
-\sigma_{f_{1}}(b_{1})\mid
 \leq K_{3} \sqrt[4]{\varepsilon},
\end{eqnarray}
where the constant $K_{3}$ is given by $$K_{3}=\left( \sigma_{f_{1}}(a_{1})|1-\sigma_{f_{1}}(a_{1})|+
\sigma_{f_{1}}(b_{1})|1-\sigma_{f_{1}}(b_{1})|\right)(1+e^{v_{1}}).$$
The last four equations imply
\begin{eqnarray}\label{eq79}
\mid G(\vartheta(l),\sigma_{f_{1}}(a_{1}))-\sigma_{f_{1}}(a_{1})\mid
 \leq 2\, K_{3} \sqrt[4]{\varepsilon},
 \end{eqnarray}
 \begin{eqnarray}\label{eq80}
\mid F(\vartheta(p),\tau(p),\sigma_{f_{1}}(b_{1})
-\sigma_{f_{1}}(b_{1})\mid
 \leq 2\, K_{3} \sqrt[4]{\varepsilon}.
\end{eqnarray}

Combining equations (\ref{eqqq71}), (\ref{eq66}), (\ref{eq70}) and (\ref{eq75})-(\ref{eq80})
we obtain finally

$$
\Big|  Dist (z_{1},z_{2},z_{3},z_{4};f_{1}^{q_{n}})-
\sigma_{f_{1}}(a_{1}) \cdot\sigma_{f_{1}}(b_{1})\Big|\leq R_{2}\sqrt[4]{\varepsilon,}
$$
which proves the first inequality in Lemma \ref{lemm4.5}.  The second inequality 
can be proven by using similar arguments as above.
\end{proof}
\section{Acknowledgements}
This work has been supported by the German Research Foundation (DFG) through the grant MA 633/22-1.  A. Dzhalilov is grateful also to  the Abdus Salam International Center for Theoretical Physics (ICTP) for partial financial support. We thank Fredrik Str\"omberg for his help  with the eps-files with the figures.

\end{document}